\def\h{\mathds{h}}
\newtheorem{theorem}{Theorem}[section]
\newtheorem{proposition}[theorem]{Proposition}
\newtheorem{corollary}[theorem]{Corollary}
\newtheorem{remark}[theorem]{Remark}
\newtheorem{example}[theorem]{Example}
\def\C{\mathbb{C}}
\def\Q{\mathbb{Q}}
\def\T{{\mathbb{T}}}
\def\wTT{{\widetilde{\mathbb{T}}}}
\def\tt{{\bf t}}
\def\hh{{\bf h}}
\def\Z{\mathbb{Z}}
\def\P{\mathbb{P}}
\def\O{{\mathcal  O}}
\def\tx{{\tilde x}}
\def\vt{\vartheta}
\def\tt{{\mathbf t}}
\def\ii{{\rm i}}
\def\Ell{{\mathcal E}\ell\ell}
\DeclareMathOperator\e{e}
\DeclareMathOperator\del{\Delta}
\DeclareMathOperator\Del{\delta\!}
\def\nuu{{\nu}}
\def\muu{{\mu}}
\title[Elliptic classes, McKay correspondence and theta identities]
{\Large Elliptic classes, McKay correspondence  \\\vskip 15pt and theta identities}
\author{Ma{\l}gorzata Mikosz\and Andrzej Weber}
\address{ Warsaw University of Technology\\
 ul.~Koszykowa 75, 00-662, Warszawa, Poland}
\email{emmikosz@mini.pw.edu.pl}
\address{
Institute of Mathematics, University of Warsaw\\
 Banacha 2, 02-097 Warszawa, Poland}
\email{aweber@mimuw.edu.pl}
\thanks{The second author is supported by the Polish National Science
  Center project Algebraic Geometry: Varieties and Structures,
  2013/08/A/ST1/00804 and  2016/23/G/ST1/04282 (Beethoven~2)}
\begin{document}
\baselineskip 15pt

\begin{abstract}We revisit the construction of elliptic class given by Borisov and
Libgober for singular algebraic varieties. Assuming torus action we adjust
the theory to the equivariant local situation.  We study theta function
identities having a geometric origin. In the case of quotient singularities
$\C^n/G$, where $G$ is a finite group the theta identities arise from
McKay correspondence. The symplectic singularities are of special
interest. The Du Val surface singularity $A_n$ leads to a remarkable
formula.
\end{abstract}
\maketitle
 The theory of theta functions is a classical subject of analysis and algebra. It had a prominent role in 19$^{\rm th}$ century mathematics, as one can see reading the monograph about Algebra \cite{We}. Nowadays it seems that the intriguing combinatorics related to theta functions has been put aside. Nevertheless there are modern sources treating the subject of theta functions in a wider context. For example the Mumford three volume book \cite{Mu} is devoted to the theta function. The mysterious pattern of Riemann relation is described in the initial chapters. This pattern repeats on and it is present in  our formulas described  in Theorem \ref{A1main}.
\medskip

At the same time from mid-seventies the theta function found an application in algebraic topo\-logy, in particular in the cobordism theory. The formal group laws generated by elliptic curves and the associated elliptic genera,  allowed to define elliptic cohomology  by a Conner-Floyd type theorem. The  collection of articles in LNM 1326 and in particular \cite{Lan} is a good reference for that approach. The article of Segal written for the Bourbaki seminar \cite{SegalEll} is an accessible survey of the beginnings of this theory.
\medskip

In the beginning of 2000' theta function was applied by Borisov and Libgober \cite{BoLi0} to construct an elliptic genus of singular complex algebraic varieties.  Totaro  \cite{Totaro} links this construction  with previous work on cobordisms.
The elliptic genus is defined in terms of resolution of singularities, but it does not depend on the particular resolution. The elliptic genus is the degree (i.e.~the integral) of some cohomology characteristic class called the {\it elliptic class}, denoted by $\Ell(-)$. That class is the main protagonist of our paper. The elliptic class is defined in terms of the theta function applied to the Chern roots of the tangent bundle of the resolution. Independence on the resolution translates to some relations involving the theta function. For example, the Fay trisecant relation \cite{Fay}, \cite{GTL} corresponds to a blowup of a surface at one point.
\medskip

The idea to study global invariants via contributions of singularities is common in mathematics and originates from Poincar\'e-Hopf theorem. It re-appeared in the work of Atiyah and Singer on the equivariant index theorem. In the presence of a torus action   the Atiyah-Bott-Berline-Vergne localization techniques apply. Each fixed point component gives a local summand of the global invariant.
The local equivariant Chern-Schwartz-MacPherson classes were studied in \cite{WeChern} and the local contributions to the Hirzebruch class were described in \cite{WeSel}. The role of the local contributions to the elliptic class in the Landau-Ginzburg model is demonstrated in \cite{LibLG}. Also, local computation is the key ingredient of the work on  elliptic classes of Schubert varieties in the generalized flag variety, \cite{RW}, \cite{KRW}.
\medskip

In the present paper we adjust the theory of Borisov-Libgober to the local equivariant situation. The initial step was done by Waelder \cite{Wae}, but we believe that our approach and formalism makes the theory accessible and convenient for further application.
Recently the theta function has served as a basic brick in construction of diverse objects, such as representations of quantum groups \cite{GTL}, weight function (in integrable systems) \cite{RTV} and stable envelopes \cite{AO}. The works \cite{RW}, \cite{KRW} form a  bridge between these theories and strictly geometric theory of characteristic classes.
Here we  present the Borisov-Libgober elliptic class in a way which fits to the context of the mentioned papers.
In \S\ref{explanation} we also present a conceptual approach to the elliptic class as the equivariant Euler class in a version of elliptic cohomology. This point of view   appeared in \cite{RW}. We take an opportunity to extend and clarify this approach. We explain the normalization constants, which allow to place the elliptic class in the common formalism used in algebraic topology.
\medskip

Further we study the elliptic classes of quotient singularities. In the local context these are the quotients $V/G$, where $V$ is a vector space and $G\subset GL(V)$ is a finite subgroup. The quotient $V/G$ is considered as a variety with the $\C^*$--action, coming from the scalar multiplication on $V$. Occasionally the quotient admits an action of a larger torus. If $G$ acts diagonally on $V=\C^n$, then whole torus $\T=(\C^*)^{n}$ acts on the quotient.
The elliptic classes of global quotients were studied by Borisov and Libgober in the paper on McKay correspondence \cite{BoLi}. Their main result is as follows. Suppose that $V$ is a complete variety, then the elliptic genus of $X=V/G$ is equal to the so-called
{\it orbifold elliptic genus}. The later is defined as the degree of a certain class, called the {\it orbifold class}, living on an equivariant resolution of $V$ and defined in terms of the action of the group $G$. This is the main result of \cite{BoLi}, but in the course of the proof it is checked that the orbifold class well behaves with respect to modifications of the resolution of the pair $(V,G)$.
\medskip

We wish to discuss the McKay correspondence showing examples. In its full generality the definitions are quite involving. We assume that $V$ is a vector space with $G$ acting linearly, $X=V/G$.
If $G\subset SL(V)$, then  the canonical divisor of $X$ is trivial. Sometimes $X$ admits a crepant resolution, i.e. a map $Y\to X$ from a smooth variety with (in this case) trivial canonical bundle.
McKay correspondence is a general rule which says that geometric invariants of $Y$ can be expressed in terms of group properties of $G$ and its representation $V$. The equivariant version of McKay correspondence for elliptic classes was established in \cite[Theorem 7]{Wae}).
It becomes particularly apparent when applied to the quotients of representations $V/G$, see \cite[\S6]{DBW}.
The original proof given in \cite{BoLi} is based on the analysis of the toric singularities. The Lemma 8.1 of  \cite{BoLi} states the strongest form of an identity involving the theta function, but its formulation is considerably complicated. The formula becomes much simpler for the symplectic quotients. For du Val singularities it can be related to combinatorics of Dynkin diagram.
We would like to state explicitly what McKay correspondence says for symplectic quotient singularities.
Among other examples we study symplectic quotients of $\C^2$, that is du Val singularities. Suppose
$\Z_n\subset SL_2(\C)$ consists of diagonal matrices ${\rm diag}(\xi^k,\xi^{-k})$, $\xi= e^{2\pi \ii/n}$.
The equality of elliptic classes (computed via resolution and the orbifold class) of $\C^2/\Z_n$ implies Theorem \ref{A1main}, which is an identity involving Jacobi theta function. The formula  specializes to the following simplified form
\begin{equation}\label{wstepid}n\, \Delta(n x,z)\Delta(-nx,z)=\tfrac1n\sum_{k,\ell=0}^{n-1}\Delta\left(\tfrac{k-\ell\tau}n+x,z\right)
\Delta\left(\tfrac{\tau\ell-k}n-x,z\right)\,,\end{equation}
where
$$\Delta(a,b)=\frac{\theta_\tau(a+b)\theta'_\tau(0)}{\theta_\tau(a)\theta_\tau(b)}\,.$$
Similar identities can be obtained for various quotient singularities. Any quotient singularity resolution gives rise to a theta identity. But except few cases the resulting formulas have complexities not allowing to present them in a compact form.
\medskip

The example of $A_{n-1}$ singularity given above is  particularly appealing. The related theta function identity is interesting on its own. In addition the elliptic class is self-dual, in the sense that if we exchange the \say{equivariant parameter} $x$ with the  \say{dynamical parameter} $z$ the class only changes its sign. A similar effect of duality appears in the work on stable envelopes, \cite{RSVZ}.
\medskip

The identities we consider here involve the Jacobi theta function $\theta_\tau$, but can be specialized (taking $\tau\to \ii\infty$) to some trigonometric identities. One of these is the following one:
$$\frac 1n\sum_{k=0}^{n-1}\frac{1}{1-2\cos(\frac{2k\pi }n) T+T^2}=\frac{1-T^{2 n}}{\left(1-T^2\right)
   \left(1-T^{n}\right)^2}\,.$$
We encourage the reader to give an elementary proof.
It seems that the above formula cannot be reduced to standard trigonometric identities.
\medskip

The second author would like to thank Jaros{\l}aw Wi{\'s}niewki for multiple conversations on the quotient singularities and for his good spirit in general.

\tableofcontents

\section{Basic functions}
\subsection{Theta function}
Let us introduce the notation
$$\e(x)=e^{2\pi \ii x}\,.$$
For $\upsilon,\, \tau\in \C$,  ${\rm im}(\tau)>0$
let $q_0=\e(\tau/2)=e^{\pi \ii \tau}$. We define the theta function $\theta_\tau(\upsilon)$ as in \cite{Cha}:
\begin{align*}\theta_\tau(\upsilon)&=\frac 1\ii\sum_{n=-\infty}^\infty(-1)^n q_0^{ (n+\frac12)^2}\e((2n+1)\upsilon/2)=\\
&=2\sum_{n=0}^\infty(-1)^n q_0^{ (n+\frac12)^2}\sin((2n+1)\pi \upsilon)
\,,\end{align*}
see  \cite[\S24 (4)]{We} or \cite[Ch.~V.1]{Cha}.
According to the Jacobi product formula \cite[Ch~V.6]{Cha}
\begin{equation}
\theta_\tau(\upsilon)=2q_0^{\frac14} \sin (\pi \upsilon)
\prod_{l=1}^{\infty}(1-q_0^{2l})
 \big(1-q_0^{2l} \e(\upsilon )\big)\big(1-q_0^{2l} /\e(\upsilon)\big)\,.
\end{equation}
In \cite{BoLi} the variable $q=q_0^2=e^{2\pi\ii \tau}$ is used, it also fits to the convention of \cite{RTV}, \cite{RW}. Therefore further we will express our formulas in that variable.
\begin{equation}\label{Jacobi-prod-for}
\theta_\tau(\upsilon)=2q^{\frac18} \sin (\pi \upsilon)
\prod_{l=1}^{\infty}(1-q^{l})
 \big(1-q^{l} \e(\upsilon)\big)\big(1-q^{l}/ \e(\upsilon)\big)\,.
\end{equation}
Here the symbol $q^{\frac18}$ simply means $\e(\tau/8)=e^{\pi \ii \tau/4}$.
The function theta $\theta_\tau$ is odd
$$\theta_\tau(-\upsilon)=-\theta_\tau(\upsilon)\,,$$
and it satisfies the quasi-periodic identities
$$\theta_\tau(\upsilon+1)=-\theta_\tau(\upsilon)\,,$$
\begin{equation}\label{deltatau0}\theta_\tau(\upsilon+\tau)=-q^{-1/2}\, \e(- \upsilon)\, \theta_\tau(\upsilon)\,.\end{equation}
Moreover
$$\theta_{\tau+1}(\upsilon)=\sqrt{\ii}\,\theta_\tau(\upsilon)\,,$$
where $\sqrt{\ii}=e^{\pi \ii/4}$,
$$\theta_{-1/\tau}\left(\tfrac\upsilon\tau\right)=\sqrt{\tfrac\tau \ii} \,\e\!\left(\tfrac{\upsilon^2}{ 2 \tau}\right) \theta_\tau(\upsilon)\,.$$
\medskip

The variable $\tau$ is treated as parameters and we will omit it  later.
It is convenient to use the multiplicative notation for variables.
Let
\begin{align*}\vt(x)&=x^{1/2}(1-x^{-1})\prod_{\ell=1}^\infty(1-q^\ell x)(1-q^\ell/x)\\
&=x^{1/2}(1-x)^{-1}\, (x;q)_\infty\, (x^{-1},q)_\infty\,.
\end{align*}
Here
$$(x;q)_\infty=\prod_{\ell=0}^\infty(1-q^\ell x)=(1-x)(1-qx)(1-q^2x)\dots$$
is the $q$-Pochhammer symbol. The function $\vt$ is defined on the double cover of $\C^*$, since we have $x^{1/2}$ in the formula.
We have
\begin{equation}\label{constant}\theta(\upsilon)={\rm const}\,  \vt(\e(\upsilon))\,,\qquad {\rm const}=\tfrac 1 \ii\,q^{\tfrac18}(q;q)_\infty\end{equation}
The constant term of $\theta$ is irrelevant for us. In the formula for the elliptic class we prefer to use $\vt$ rather than $\theta$ notation.
The function $\theta$ can be treated as a section of the line bundle ${\mathcal O_E}([0])$ over the elliptic curve $E=\C/\langle 1,\tau\rangle$ defined by $\tau$.
\bigskip

\begin{remark}\rm Wolfram--Mathematica package has implemented the Jacobi theta function with the following convention:
$$\theta_\tau(\upsilon)=\ii\; \rm{EllipticTheta}[1,\pi\upsilon,q_0]\,,$$
where $q_0=e^{\pi \ii\tau}$.
In MAGMA package
$$\theta_\tau(\upsilon)=\rm{JacobiTheta}(q_0,\pi\upsilon)\,.$$ The identities discussed in this paper were checked numerically to make sure that the exponents and the conventions agree.
\end{remark}

\subsection{The function $\del_\tau(a,b)$ and $\delta_\tau(a,b)$}
In the definition of elliptic characteristic classes the  quotient of the form
$$\del_\tau(a,b)=\frac{\theta'_\tau(0)\,\theta_\tau(a+b)}{\theta_\tau(a)\,\theta_\tau(b)}$$
appears.  The normalizing factor
  \eqref{constant}, which is independent of $\upsilon$, cancels out. This factor is omitted  e.g.~in \cite{AO}, \cite{RTV}, \cite{RW}, \cite{KRW}.
The meromorphic function $\del_\tau$ is odd
\begin{equation}\del_\tau(-a,-b)=-\del_\tau(a,b)\,,\end{equation}
it satisfies the following quasi-periodic identities:
$$\del_\tau(a,b)=\del_\tau(b,a)\,,$$
\begin{equation}\label{per1}\del_\tau(a+1,b)=\del_\tau(a,b)\,,\end{equation}
\begin{equation}\label{per2}\del_\tau(a+\tau,b)=\e(-b)\, \del_\tau(a,b)\,.\end{equation}
Moreover
$$\del_{\tau+1}(a,b)=\del_\tau(a,b)\,,$$
$$\del_{-1/\tau}(a,b)=\tau \e\left(\tfrac{ab}\tau\right)\,\del_\tau(a,b)\,.$$
The function $\del_\tau$ defines a section of a bundle over $E^2$.
\bigskip

We prefer the multiplicative notation
We will write
 $\delta(\tfrac AB,\tfrac CD)$ instead of $\del(a-b,c-d)$:
$$\theta(a-b)={\rm const}\,\vt(\tfrac AB)\,,\qquad\tfrac1{2\pi \ii}\Delta(a-b,c-d)=\delta(\tfrac AB,\tfrac CD)$$
when $$A=\e(a)=e^{2\pi \ii a},\quad B=\e(b)=e^{2\pi \ii b},\quad C=\e(c)=e^{2\pi \ii c},\quad D=\e(d)=e^{2\pi \ii d}.$$
The function $\delta$ is well defined on $\C^*\times \C^*$ since the fractional powers cancel out. It has the following quasi-periodic properties
\begin{equation}\label{deltatau}\delta(qA,B)=B^{-1}\delta(A,B)\,,\qquad \delta(A,qB)=A^{-1}\delta(A,B)\,.\end{equation}

\section{The $A_{n-1}$ identity}
Resolutions of quotient singularities give rise to certain identities for theta function. We will explain this mechanism in the subsequent sections, but first we would like to give an example of the cyclic symplectic quotient $\C^2/\Z_n$, i.e.~the singularity $A_{n-1}$. It leads to the following interesting identity:
\begin{theorem} \label{A1main}  Let $n>0$ be a natural number. Let $t_1$, $t_2$, $\muu_1$ and $\muu_2$ be indeterminate. The following two meromorphic functions in four variables (and $\tau$) are equal:
\begin{align*}(*)&\quad A_n(t_1,t_2,\muu_1,\muu_2)=
\sum_{k=1}^{n}\Del\left( \tfrac{t_1^{n-k+1}}{t_2^{k-1}},\muu_1^{k}\muu_2^{n-k}\right)
\Del\left(\tfrac{t_2^{k}}{t_1^{n-k}},{\muu_1^{k-1}}{\muu_2^{n-k+1}}\right),\\
(**)&\quad B_n(t_1,t_2,\muu_1,\muu_2)=\frac1n \sum_{k=0}^{n-1}
\sum_{\ell=0}^{n-1}\big(\tfrac{\muu_2}{\muu_1}\big)^\ell\Del\left(\e\left(\tfrac{k-\ell\tau}n\right)t_1,\muu_1^n\right)
\Del\left(\e\left(\tfrac{\tau\ell-k}n\right)t_2,\muu_2^n\right).\end{align*}
\end{theorem}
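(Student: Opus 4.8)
The plan is to fix three of the four variables generically and regard $F:=A_n-B_n$ as a meromorphic function of $t_1\in\C^*$ alone, then to show that $F$ is holomorphic on $\C^*$ and quasi-periodic with a \emph{constant} multiplier, which forces $F\equiv 0$. The first step is to record the transformation laws. Applying \eqref{deltatau}, i.e.\ $\Del(qA,B)=B^{-1}\Del(A,B)$ and $\Del(A,qB)=A^{-1}\Del(A,B)$, termwise, one checks that under $t_1\mapsto q\,t_1$ every summand of $A_n$ is multiplied by the same monomial $\muu_1^{-n}$ (a termwise cancellation removes all other exponents), and likewise every summand of $B_n$ is multiplied by $\muu_1^{-n}$ through its first $\Del$-factor. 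Hence $F(q\,t_1,\dots)=\muu_1^{-n}F(t_1,\dots)$, with a multiplier independent of $t_1$. The analogous substitutions give matching constant multipliers $\muu_2^{-n}$, $t_1^{-n}$, $t_2^{-n}$ under $t_2\mapsto q t_2$, $\muu_1\mapsto q\muu_1$, $\muu_2\mapsto q\muu_2$; this is the Riemann-type bilinear pattern mentioned in the introduction.

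Next I would locate the $t_1$-poles. Since $\Del(A,B)$ has simple poles precisely where $A\in q^{\Z}$ or $B\in q^{\Z}$, with residue $1$ in $A$ at $A=1$, the poles of $B_n$ sit on the $n$-torsion locus $t_1^n\in q^{\Z}$, where for generic data exactly one summand is singular. For $A_n$ the individual factors are singular along $t_2$-dependent loci, but these interior poles cancel in pairs: writing $X_k=t_1^{n-k+1}t_2^{-(k-1)}$ for the first slot of the $k$-th summand, one has that $X_k^{-1}$ equals the first slot of the second $\Del$-factor of the $(k-1)$-st summand, and along $X_k\in q^{\Z}$ the oddness relation $\Del(A^{-1},B^{-1})=-\Del(A,B)$ makes the two residues equal and opposite. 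Only the two unpaired ends $X_1=t_1^{n}$ and $t_2^{n}$ survive; the latter is free of $t_1$, so in the variable $t_1$ the poles of $A_n$ lie exactly on $t_1^{n}\in q^{\Z}$, as for $B_n$.

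It then remains to match residues at each torsion point $t_1^{*}=\e(a/n)q^{b/n}$. Here only the $k=1$ summand of $A_n$ contributes, and the $n$-fold cover $t_1\mapsto t_1^{n}$ produces a factor $\tfrac1n$ that pairs with the explicit normalization of $B_n$, where in turn only the summand with $k\equiv-a$, $\ell\equiv b\pmod n$ is singular. Reducing both residues by the second-slot quasi-periodicity of $\Del$ and using $(t_1^{*})^{n}=q^{b}$, both collapse to the same multiple of $\Del(t_1^{*}t_2,\muu_2^{n})$, so $F$ is holomorphic on $\C^*$. A function holomorphic on $\C^*$ with $F(q\,t_1,\dots)=\muu_1^{-n}F(t_1,\dots)$ has Laurent coefficients obeying $c_m(q^{m}-\muu_1^{-n})=0$; for generic $\muu_1$ this gives $c_m=0$ for all $m$, whence $F\equiv 0$ and $A_n=B_n$ identically.

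The main obstacle is purely the bookkeeping in the two cancellation steps --- the telescoping of the interior poles of $A_n$ and the residue comparison at the torsion points --- both of which reduce, after repeated use of \eqref{deltatau} and the oddness of $\Del$, to elementary identities. Conceptually the statement is the McKay correspondence for the elliptic class of $\C^2/\Z_n$: the $n$ summands of $A_n$ are the Atiyah--Bott contributions of the $n$ torus-fixed points of the minimal toric resolution, while $B_n$ is the orbifold elliptic class, averaged over $\Z_n$. An alternative, more conceptual, route is therefore to set up both classes geometrically and invoke that correspondence directly.
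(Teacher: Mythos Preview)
Your argument is correct and genuinely different from the paper's. The paper does not prove the identity analytically at all: in \S\ref{A1proof} it sets up the minimal toric resolution of $\C^2/\Z_n$ with the divisor $D_X=a_1D_1+a_2D_2$, reads off the tangent weights and multiplicities at each of the $n$ torus-fixed points to obtain $(*)$, identifies $(**)$ with the orbifold formula \eqref{sympmckay}, and then simply invokes the equivariant McKay correspondence (Theorem \ref{BoLimK}, i.e.\ Borisov--Libgober/Waelder). In other words, the paper deduces the theta identity from the geometry; you prove the theta identity directly.

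Your route is more elementary and self-contained: the only inputs are the transformation law \eqref{deltatau}, the oddness of $\Del$, and the standard ``holomorphic section of a nontrivial degree-zero line bundle on $E$ vanishes'' step at the end. The telescoping of interior poles works exactly because $X_{k-1}X_{k+1}=X_k^{2}$, so that at $X_k=q^{m}$ the regular cofactors $\Del(X_{k+1}^{-1},\mu_1^{k-1}\mu_2^{n-k+1})$ and $\Del(X_{k-1},\mu_1^{k-1}\mu_2^{n-k+1})$ differ by exactly the power of $\mu$ needed to match the residues of the singular factors; the residue comparison at $t_1^{n}\in q^{\Z}$ likewise goes through once one uses $(t_1^{*})^{n-1}=q^{b}/t_1^{*}$ and the second-slot quasi-periodicity. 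These are precisely the ``bookkeeping'' steps you flag, and they are routine. What the paper's approach buys is conceptual transparency (the identity \emph{is} McKay correspondence for $A_{n-1}$) and an automatic supply of analogous identities for other quotients; what your approach buys is independence from the heavy machinery of \cite{BoLi} and \cite{Wae}, and in fact it could be read as an independent verification of that machinery in this case.
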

In particular when we specialize the identity to the case $t_1=t_2^{-1}=t$, $\muu_1=\muu_2$, and setting $\h=\muu_1^n$, we obtain
$$n\, \delta(t^n,\h)\delta(t^{-n},\h)=\tfrac1n\sum_{k,\ell=0}^{n-1}\Del\left(\e\left(\tfrac{k-\ell\tau}n\right)t,\h\right)
\Del\left(\e\left(\tfrac{\tau\ell-k}n\right)t^{-1},\h\right)\,,$$
which is the identity \eqref{wstepid} written in the multiplicative notation.
To visualize the sum (*) we can apply the following quiver, which is the Goreski-Kottwitz-MacPherson (GKM) graph of the minimal resolution of the quotient singularity $A_{n-1}$. The quiver (oriented graph) is the following: \begin{itemize}
\item $n$  vertices indexed by $V=\{1,2,\dots,n\}$,
\item $n+1$ arrows indexed by \say{weights}.
For a vertex $k\in V$ there is one arrow outgoing with the weight ${\rm out}(k)=(n-k,k)$ and one incoming with the weight ${\rm in}(k)=(n-k+1,k-1)$. The external arrows have loose ends.\end{itemize}
\def\strzd#1{\hskip-1pt\mathop{\xrightarrow{\hspace*{1.2cm}}}\limits^{#1}\hskip-1pt}
\def\strzk#1{\hskip-1pt\mathop{\xrightarrow{\hspace*{0.7cm}}}\limits^{#1}\hskip-1pt}

\begin{equation}\label{GKM}\strzk{(n,0)}\boxed{1}
\strzd{(n-1,1)}\boxed{2}\strzd{(n-2,2)}
~~\dots~~~\strzd{(2,n-2)}\boxed{n-1}\strzd{(1,n-1)}
\boxed{n}\strzk{(0,n)}
\end{equation}
Let $${\bf n}=(n,n)\,,\quad\tt=\{t_1,t_2^{-1}\}\,,\quad \tt^{(a,b)}=t_1^{a}t_2^{-b}\,,\quad\hh=\{\muu_1,\muu_2\}\,,\quad \hh^{(a,b)}=\muu_1^{a}\muu_2^b\,.$$
Then
$$(*)=
\sum_{k\in V}\delta(\tt^{{\rm in}(k)},\hh^{{\bf n}-{\rm out}(k)})\cdot\delta(\tt^{-{\rm out}(k)},\hh^{{\bf n}-{\rm in}(k)})\,.$$
The second sum (**) is clearly the summation over the lattice points contained in a parallelogram
$$\Lambda=\big\{\tfrac kn-\tfrac \ell n \tau\in\C\;\big|\;k\in\{0,1,\dots,n-1\}\,,\;\ell\in\{0,1,\dots,n-1\}\big\}\,.$$
The set $\Lambda$ is in a bijection with the $n$-torsion points of $E=\C/\langle 1,\tau\rangle$. Note that in $(*)$ the variables of the same lower indices are mixed in arguments of $\delta$, while in $(**)$ they are separated.

\begin{example}\rm\label{A1} For $n=2$ we have
\begin{align*}(*)\quad&
\delta(t_1^2,\muu_1\muu_2)\delta(t_2/t_1,\muu_2^2)\;+
\delta(t_1/t_2,\muu_1^2)\delta(t_2^2,\muu_1\muu_2)\,\\
(**)\quad&\frac12 \Big(
\delta(t_1,\muu_1^2)\;\delta(t_2,\muu_2^2)\\
&\phantom{aaaa}+\delta(-t_1,\muu_1^2)\;\delta(-t_2,\muu_2^2)\\
&\phantom{aaaaaaa}+\delta(t_1q^{-1/2},\muu_1^2)\;\delta(t_2q^{1/2},\muu_2^2)\\
&\phantom{aaaaaaaaaa}+\delta(-t_1q^{-1/2},\muu_1^2)\;\delta(-t_2q^{1/2},\muu_2^2)\Big)\,.
\end{align*}
Note that for $t_1=t_2$ only $(**)$ makes sense since $\delta(x,y)$  has a pole at $x=1$.
In the additive notation  the formula reads
\begin{align*}(*)\quad
&\del(2t_1,\muu_1+\muu_2)\del(- t_1+t_2,2\muu_2)+
\del(t_1-t_2,2\muu_1)\del(2t_2,\muu_1+\muu_2)
\,,\\
(**)\quad&\frac1n\del(t_1+\tfrac kn -\tau\tfrac\ell n)
\cdot\del(t_2-\tfrac kn +\tau\tfrac\ell n)\Big)=\\&=\frac12 \Big(\del(t_1,2\muu_1)\del(t_2,2\muu_2)\\
&\phantom{aaaaaa}+\del(t_1+\tfrac12,2\muu_1)\del(t_2-\tfrac12,2\muu_2)\\
&\phantom{aaaaaaaaa}+\del(t_1-\tfrac12\tau,2\muu_1)\del(t_2+\tfrac12\tau,2\muu_2)\\
&\phantom{aaaaaaaaaaaa}+\del(t_1+\tfrac12-\tfrac12\tau,2\muu_1)\del(t_2-\tfrac12+\tfrac12\tau,2\muu_2)\Big)\,.
\end{align*}
\end{example}

Theorem \ref{A1main}  is a consequence of the equality of elliptic classes: according to \cite{BoLi} the class computed from the resolution and the orbifold elliptic class. In the next sections we will recall the necessary definitions and transform  them into a more convenient form, specific for symplectic singularities. The proof of Theorem \ref{A1main} is given in \S\ref{A1proof}.

\section{Elliptic class}
\subsection{Smooth variety}
Suppose that $X$ is a smooth complex variety. Then the elliptic class in cohomology is defined by the formula
$$\Ell_0(X)=\prod_{k=1}^{\dim X} x_k \frac{\theta(x_k-z)}{\theta(x_k)}\in H^*(X)[[q,z]]\,,$$
where $x_k$'s are the Chern roots of $TX$. In the multiplicative notation
$$\Ell_0(X)=\prod_{k=1}^{\dim X} x_k \frac{\vt(\xi_k\h)}{\vt(\xi_k)}\in H^*(X)[[q,z]]\,,$$
where $\xi_k=e^{x_k}$, $\h=\e(-z)$. (In the literature the indeterminate $\h$ is denoted by $h$ or by $\hbar$, which should rather be $e^{2\pi \ii\hbar}$. We want to keep the plain $h$ to denote elements of a finite group acting on $X$.)
The normalized elliptic class is defined by
\begin{equation}\label{normalization}\Ell(X)=\left(\tfrac{\vt'(1)}{\vt(\h)}\right)^{\dim X}\Ell_0(X)=eu(TX)\prod_{k=1}^{\dim X}\delta(e^{x_k},\h)\,.\end{equation}
Here $eu(TX)$ denotes the Euler class in $H^*(X;\Q)$.
The constant $\tfrac{\vt'(1)}{\vt(\h)}$ appearing in the definition of $\Delta$ is chosen to have $$\lim_{x\to 0}\left(x\delta(e^x,\h)\right)=1\,.$$
(We note that for the normalization used in \cite{BoLi0} the analogous limit is equal to $\frac1{2\pi \ii}$.)
The multiplicative notation has a deeper sense.  Instead of the elliptic class in cohomology we can consider \say{elliptic bundle}
 in the K-theory, see \cite[Formula (3)]{BoLi}.

\begin{example}\rm The elliptic nonequivariant genus is defined as the integral of the elliptic class. For the projective line it is equal to
$$Ell(\P^1):=\int_{\P^1}\Ell(\P^1)=\lim_{t\to 1}\left(\delta(t,\h)+\delta(t^{-1},\h)\right)=2\frac{\h \vt '(\h)}{\vt(\h)}\,.$$
\end{example}

\subsection{Interpretation of the elliptic class as the equivariant Euler class in elliptic cohomo\-lo\-gy}\label{explanation}
Naturally the elliptic class belongs to a version of equivariant elliptic cohomology $E_{\T\times \C^*}(X)$. We present below an explanation.
We assume that $E$ is an equivariant, complex oriented genera\-li\-zed cohomology theory, with a map $\Theta$ to equivariant cohomology extended by a formal parameter $q$, such that the Euler class in $E$ of a line bundle $L$  is mapped to the theta function:
$$\begin{matrix}\Theta:&E_\T(X)&\to&\muu_\T(X;\Q)\otimes \C((q))\\ \\
&eu^E_{\T}(L)&\mapsto &\tfrac{2\pi \ii}{\theta'(0)}\, \theta\!\left(\frac{c_1(L)}{2\pi \ii}\right)&=&\tfrac{1}{\vt'(1)}\, \vt\!\left(e^{c_1(L)}\right)\,.\end{matrix}$$
For the notion of the orientation and Euler class in generalized cohomology theories see \cite[Chapter 5]{Stong},  \cite[\S42]{FF}. A version of the theta function as a choice for the Euler class (or equivalently the choice of the related formal group law) appears in the literature, see \cite[p.197]{SegalEll} and in \cite{Totaro}. For our purposes it is enough to take as the equivariant elliptic cohomology the usual (completed) Borel equivariant cohomology
$E_\T(-)=\hat \muu_\T(-;\Q)\otimes \C((q))$ with the formal group law $F(x,y)=\theta(\theta^{-1}(x)+\theta^{-1}(y))$.
Suppose a torus $\T$ (possibly $\T$ is trivial) acts on $X$. We consider a bigger torus $\wTT=\T\times \C^*$, where $\C^*$ acts on $X$ trivially. The bundle $TX$ is an $\wTT$-equivariant bundle with the $\C^*$ action via the scalar multiplication. Formally we write $TX\otimes \h$, while $TX$ denotes the tangent bundle with the trivial action of $\C^*$. Let $$z=-\tfrac{c_1(\h)}{2\pi \ii }\in H^*_\wTT(\{{\rm pt}\})\,,\qquad \h=e^{-z}\,.$$
The elliptic class of $X$ is defined as the elliptic Euler class of the equivariant bundle $TX\otimes \h$. The elliptic genus of $X$ is defined as the push forward to the point 
of
$eu^E_{\wTT}(TX\otimes \h)$.
 By the generalized Riemann-Roch theorem \cite[42.1.D]{FF} the push-forward in $E$ can be replaced by the push forward  of the cohomology class
\begin{multline*}\tfrac{eu_\wTT(TX)}{\Theta(eu_\wTT^E)}\cdot\Theta\!\left(eu^E_\wTT(TX\otimes \h)\right)=
\tfrac{\prod_{k=1}^{\dim X}x_k}{\prod_{k=1}^{\dim X}\vt(e^{x_k})}\cdot\prod_{k=1}^{\dim X}\vt(e^{x_k} \h)=\\
=eu_\wTT(TX)\cdot\prod_{k=1}^{\dim X}\frac{\vt(e^{x_k} \h)}{\vt(e^{x_k})}\;\;\in\;\;  \muu_\wTT(X;\Q)\otimes
\C((q))
\,.\end{multline*}
Normalization \eqref{normalization} by the factor $\left(\frac{\vt'(1)}{\vt(\h)}\right)^{\dim X}$ might be interpreted as computing the \say{virtual} Euler class of the bundle $$TX\otimes \h-\h^{\oplus \dim X}\,.$$
The price we have to pay is that we invert $q$. The result belongs to $\hat H^*_\wTT(X;\Q)((q))$.
\medskip

It is natural to write the analogous transformation $\Theta^K$ to K-theory. Nevertheless the requirement $eu^E(L)\mapsto \vt(L)\in K_\T(-)\otimes\C((q))$ does not make sense, since in the definition of $\vt$ the square root of the argument appears. On the other hand the formula $$eu^K(L)\,\delta(L,\h)=(1-L^{-1})\,\delta(L,\h)$$ does make sense.
Therefore the (image of the) elliptic class is defined in the equivariant K-theory.
If the fixed point  $x\in X^\T$ is isolated, then the localized classes is given by the formula
$$\Theta^K\big(eu^E_\T(TX\otimes \h-\h^{\oplus \dim X})\big)_{|x}= \prod_{k=1}^{\dim X}\vt({\xi_k})\delta({\xi_k}, \h)\,,$$ where $\xi_k$'s are the Grothendieck roots of $TX$. The result is understood formally in a completion of the ring ${\rm R}(\T)\otimes \C((q))$.\medskip

The elliptic cohomology as a generalized cohomology theory was constructed in several setups, still the construction of an equivariant version does not seem to be  satisfactory. Of course rationally the theory is much easier. In the modern approach the elliptic cohomology ring $E_\T(X)$ is replaced by a scheme, and the Euler class is a section of some line bundle over that scheme, see \cite{GKV}, \cite[\S7.2]{Ga}, \cite[\S3.2]{ZZ}.

\subsection{Elliptic class of a singular variety admitting a crepant resolution}
The elliptic class of a singular variety was defined by Borisov and Libgober. In their original paper \cite{BoLi} they define a cohomology class for a suitable resolution of singularities $Y\to X$. The classes agree whenever one resolution is dominated by another. The push forward of that class  to $X$ defines a homology class of the singular variety itself, which does not depend on the choice of a resolution. The equivariant version of the theory works equally well: it is treated in \cite{Wae}, \cite{DBW}, \cite{RW}. We will assume that a torus $\T$ is acting on $X$ and $Y$ and the
resolution map $f:Y\to X$ is equivariant. For convenience suppose that $X$ is equivariantly embedded in a smooth ambient space $M$ and the set of fixed points $M^\T$ is finite. Let's assume that the fixed point set $Y^\T$ is finite as well. Then using Lefschetz-Riemann-Roch localization theorem for the push forward (see e.g.~\cite[Th.~5.11.7]{ChGi}, \cite[Prop.~2.7]{RW}) we can express the elliptic characteristic class as the sum of the terms $\frac{\Ell^\T(X)_{|x}}{eu(x,M)}$ depending on the local data coming from the fixed points $Y^\T$. If the resolution is crepant, then the localized elliptic class of $X$ restricted to a fixed point $x\in X^\T$ satisfies
\begin{equation}\label{rescrepformula}\frac{\Ell^\T(X)_{|x}}{eu(x,M)}=
\sum_{\tx\in f^{-1}(x) }\frac{\Ell^\T(Y)_{|\tx}}{eu(\tx,Y)}=
\sum_{\tx\in f^{-1}(x) }
\prod_{k=1}^{\dim Y} \delta(\tt^{w_i}(\tx),\h)
\,,
\end{equation}
where $w_1(\tx),w_2(\tx),\dots,w_{\dim Y}(\tx)$ are the weights of the torus action on  $T_\tx Y$ and $eu(x,M)$ is the product of the Chern roots of $T_xM$.

\begin{remark}\rm The original notation, given by Borisov and Libgober, is additive and uses an indeterminate $z$. Our $\h$ translates to $\e(-z)$. Also the equivariant variables $\tt=(t_1,t_2,\dots, t_n)$ should be understood as the exponents of the generators of $H^2_\T({{\rm pt}})$, or the elements of the representation ring ${\rm R}(\T)$.\end{remark}

\subsection{The relative elliptic class}
If the resolution $f:Y\to X$ is not crepant, then the formula \eqref{rescrepformula} for the elliptic class should be corrected by the discrepancy divisor. It is natural to consider the pairs $(X,D_X)$ from the beginning, where $D_X$ is a Weil $\Q$-divisor, such that $K_X+D_X$ is $\Q$-Cartier. Then
$\Ell^\T(X,D_X)$ is defined as $f_*\Ell^\T(Y,D_Y)$ whenever $f^*(K_X+D_X)=K_Y+D_Y$ and the pair $(Y,D_Y)$ is a resolution of $(X,D_X)$.
The formula for $\Ell^\T(Y,D_Y)$ makes sense only when the coefficients of $D_Y$ are not equal to 1, and to show  independence on the resolution Borisov and Libgober assume that the coefficients are smaller then 1 (equivalently, that the pair $(X,D_X)$ is Kawamata log-terminal). We give the formula for $\Ell^\T(Y,D_Y)$ in the equivariant case with isolated fixed point set. We assume that in some equivariant coordinates $$D_Y=\{ z_1^{a_1}\, z_2^{a_2}\dots  z_n^{a_n}=0\}$$ and the weight of the coordinate $z_k$ is equal to $w_k$ for $k=1,2,\dots, n=\dim Y$. Then
\begin{equation}\label{resolutionformula}\frac{\Ell^\T(Y,D_Y)_{|\tx}}{eu(\tx,Y)}=
\prod_{k=1}^n \delta(
\tt^{w_k},\h^{1-a_k})
\,.
\end{equation}
The symbol $\delta(
\tt^{w_k},\h^{1-a_k})$ is understood formally. It is a Laurent power series belonging to a suitable extension of the representation ring ${\rm R}(\T\times \C^*)={\rm R}(\T)[\h^{\pm1}]$, or via the Chern character to the completion of $H^*(B\T)[z]$, see \cite{DBW},\cite{RW} for further explanations.

\bigskip

If the fixed point set $Y^\T$ is not finite, as it happens for the singularity $D_4$ then the summation runs over the components of the fixed points. In that case one has to apply Atiyah-Bott-Berline-Vergne localization theorem in its full form.  In concrete cases one can  find a neighbourhood of the fixed component on which a bigger torus is acting with finitely many fixed points. In the case of $D_4$ it is possible to proceed that way.
\bigskip

We can extend the point of view presented in \S\ref{explanation}. Locally we compute the equivariant elliptic Euler characteristic of the bundle $TX$ twisted by $h$ in some power, not in the homogeneous manner, but the twisting depends on the direction. The exponents are encoded in the divisor $D_Y$. The relative elliptic class is the image of the Euler class 
of the virtual bundle
$$\bigoplus_{k=1}^{\dim Y} \xi_k\otimes \h^{1-a_k}\,- \,\bigoplus_{k=1}^{\dim Y} \h^{1-a_k}\,\,,$$
where $\xi_k$'s are the Grothendieck roots of $T_\tx Y$ adapted to the divisor $D_Y$.
To make sense of that formula we
extend the coefficients by the roots of line generators.
Equivalently, we replace the torus $\T$ by its finite cover.
 The localized elliptic classes considered in \cite{RW} belong to that ring.

\section{Resolution of singularities and theta identities}
Before discussing theta identities related to the quotient singularities let us give examples of identities which can be deduced from the invariance of the elliptic class with respect to the change of the resolution.

\subsection{Blowup and the Fay trisecant relation}\label{Fay}
  Let $X=\C^n$, $D_X=\sum_{i=1}^na_i D_i$, where $D_i=\{z_i=0\}$. Let $Y$ be the blowup of $\C^n$ at 0. The exceptional divisor is denoted by $E\simeq \P^{n-1}$. Then
$$D_Y=\sum_{i=1}^na_i\widetilde D_i+\left(\sum_{i=1}^n a_i-n+1\right)E$$
where
$\widetilde D_i$ is the strict transform of $D_i$. The torus $\T=(\C^*)^n$ acts coordinatewise on $X$.
There are $n$ fixed points of $\T$ acting on the blow-up $Y$, they belong to the exceptional divisor. At the point $[1:0:\dots:0]$ the characters of the action are the following:
$$t_1\;\;\text{in the normal direction, }\quad \tfrac{t_j}{t_1}\;\text{ for }j=2,\dots, n\;\;\text{in the tangent directions.}$$
At the remaining fixed points the formulas differ by a permutation of variables. We compute the local equivariant elliptic class applying the formula \eqref{resolutionformula}.
By Lefschetz-Riemann-Roch \cite[Th.~5.11.7]{ChGi} for the blow-down map $f:Y\to X$  we obtain the formula for the push forward
$$\frac{f_*(\Ell(Y))_{|0}}{eu(0,\C^n)}=
\sum_{i=1}^n\Big(\delta\big(t_i,{\textstyle\h^{n-\sum_{j=1}^n a_j}}\big)\cdot\prod_{j\neq i}\delta\big(\tfrac{t_j}{t_i},\h^{1-a_j}\big)\Big)\,.$$
This sum is equal to the expression computed directly on $X$, i.e. to the product of the $\delta$ functions. Setting $\muu_i=\h^{1-a_i}$ and substituting in the above formula we arrive to the following identity.
\begin{theorem} For $n\geq 2$ we have \begin{equation}\label{Fayhigh}\prod_{i=1}^n\delta\big(t_i,\muu_i\big)=\sum_{i=1}^n\Big(\delta\big(t_i,{\textstyle\prod_{j=1}^n \muu_j}\big)\cdot\prod_{j\neq i}\delta\big(\tfrac{t_j}{t_i},\muu_j\big)
\Big)\,.\end{equation}
\end{theorem}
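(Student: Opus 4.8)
The plan is to prove \eqref{Fayhigh} by the classical Liouville argument. Fixing $t_2,\dots,t_n$ and the parameters $\muu_1,\dots,\muu_n$, I would regard both sides as meromorphic functions of the single variable $t_1$, descending to the elliptic curve $E=\C/\langle 1,\tau\rangle$ (here $t_1\in\C^*$ and the shift $t_1\mapsto q\,t_1$ generates the second period). The first step is to check that both sides transform in the same way under this shift. Using the quasi-periodicity \eqref{deltatau}, the left-hand side is multiplied by $\muu_1^{-1}$ when $t_1\mapsto q\,t_1$, since only the factor $\delta(t_1,\muu_1)$ moves. On the right-hand side the $i=1$ summand acquires $(\prod_j\muu_j)^{-1}$ from $\delta(t_1,\prod_j\muu_j)$ and $\prod_{j\neq1}\muu_j$ from the $n-1$ factors $\delta(t_j/t_1,\muu_j)$ (each argument is divided by $q$), for a net factor $\muu_1^{-1}$; every summand with $i\neq1$ contains exactly one factor $\delta(t_1/t_i,\muu_1)$ and so is also multiplied by $\muu_1^{-1}$. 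Hence both sides are sections of one and the same line bundle over $E$.

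The heart of the proof is the pole analysis. On $E$ the function $\delta(A,B)$ has, for generic $B$, a single simple pole at $A=1$, with $\lim_{x\to0}x\,\delta(e^x,B)=1$. Thus the left-hand side has exactly one simple pole, at $t_1=1$, with residue $\prod_{j\neq1}\delta(t_j,\muu_j)$. The right-hand side has apparent poles at $t_1=1$ and at $t_1=t_k$ for each $k\neq1$. At $t_1=1$ only the $i=1$ summand is singular, and its residue is again $\prod_{j\neq1}\delta(t_j,\muu_j)$. At $t_1=t_k$ two summands are singular: the $i=1$ summand through $\delta(t_k/t_1,\muu_k)$ and the $i=k$ summand through $\delta(t_1/t_k,\muu_1)$; writing $t_1=t_k\,e^{y}$ and using $\delta(e^{\pm y},\cdot)\sim\pm 1/y$, the two residues are $\mp\,\delta(t_k,\prod_j\muu_j)\prod_{j\neq1,k}\delta(t_j/t_k,\muu_j)$ and therefore cancel. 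Consequently the right-hand side also has a single simple pole at $t_1=1$, with the same residue as the left-hand side.

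It remains to conclude. The relevant line bundle has degree one, so its space of meromorphic sections with the given multiplier and at most a simple pole at $t_1=1$ is one-dimensional. Both sides lie in this space, hence are proportional; comparing residues at $t_1=1$ forces the proportionality constant to be $1$, giving \eqref{Fayhigh} for generic parameters, and the general case follows by analytic continuation. I expect the residue cancellation at $t_1=t_k$ to be the only genuine obstacle — it is where the precise pairing of the arguments in the two summands matters — while the multiplier and the dimension count are routine. Alternatively, the identity is precisely the equality of the two localized expressions for the elliptic class of $(\C^n,D_X)$ sketched just before the statement, so one may instead simply invoke the invariance of the Borisov--Libgober class under the blow-up $f\colon Y\to X$ together with \eqref{resolutionformula} and Lefschetz--Riemann--Roch.
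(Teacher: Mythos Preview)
Your Liouville argument is correct: the multiplier computation, the residue cancellation at $t_1=t_k$, and the dimension count all go through as you describe (for generic $\muu_1$ the degree-zero bundle with multiplier $\muu_1^{-1}$ has no nonzero holomorphic sections, so the difference of the two sides vanishes). This is, however, not the route the paper takes. The paper's argument is precisely the alternative you mention in your last sentence: the identity is obtained by computing the localized equivariant elliptic class of the pair $(\C^n,\sum a_iD_i)$ in two ways, once directly on $\C^n$ and once via the blow-up at the origin, and then invoking the Borisov--Libgober invariance of $\Ell$ under change of resolution together with Lefschetz--Riemann--Roch; the substitution $\muu_i=\h^{1-a_i}$ yields \eqref{Fayhigh}. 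Your analytic proof is self-contained and elementary --- it needs nothing beyond the quasi-periodicity \eqref{deltatau} and the local expansion of $\delta$ --- whereas the paper's derivation rests on the heavier machinery of elliptic classes but, in exchange, exhibits the identity as a manifestation of a geometric fact (a single blow-up), which is the organizing theme of the paper. Either approach is acceptable; just be aware that the paper's emphasis is on the geometry producing the identity rather than on verifying it directly.
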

The identity \eqref{Fayhigh} for $n=2$,
 is the {\it three term identity} given in \cite[Example 2.9]{RW}, which agrees with \cite[formula (2.6)]{RTV}.
When
written in the additive notation, after the substitution
$$t_1= a - d\,,\qquad t_2 = c - d\,,\qquad \mu_1 = c + d\,,\qquad \mu_2 = b - c$$
and multiplication by the common denominator
the identity takes the form
of the  Fay's trisecant identity (see \cite{GTL}, \cite[\S5.2]{Risu}).
\begin{multline}
 \theta(a+c)\theta(a-c)\theta(b+d)\theta(b-d)=\\
 \theta(a+b)\theta(a-b)\theta(c+d)\theta(c-d)+\\+ \theta(a+d)\theta(a-d)\theta(b+c)\theta(b-c)\,.
\label{trisecant}\end{multline}
For arbitrary $n$ the formula  \eqref{Fayhigh} can be transformed to a symmetric form in the following way:
Set
\begin{align*}
t_i=&\frac{x_i}{x_0}\,,\qquad\mu_i=\frac{\xi_i}{\xi_{i-1}}\qquad\text{ for } i=1,2\dots n\,.\\
\end{align*}
Then the first factor of the RHS of \eqref{Fayhigh} is equal to
$$\delta\left(t_i,\textstyle{\prod_{j=1}^n\mu_j}\right)=\delta\left(\frac{x_i}{x_0},\frac{\xi_n}{\xi_0}\right)=-\delta\left(\frac{x_0}{x_i},\frac{\xi_0}{\xi_n}\right)\,.$$
The identity \eqref{Fayhigh} can be rewritten as
\begin{equation}\sum_{i=0}^n\;\prod_{\begin{matrix}j=0,1,\dots, n\\j\neq i\end{matrix}}\delta\left(\frac{x_j}{x_i},\frac{\xi_j}{\xi_{j-1}}\right)=0\,.\end{equation}
Here $\xi_{-1}=\xi_n$.


\subsection{Braid relation}
The following expressions are equal:
\begin{equation}\label{s1s2s1}
\delta\left(\frac{t_2}{t_1},\frac{\muu_3}{\muu_2}\right)
\delta\left(\frac{t_3}{t_2},\frac{\muu_3}{\muu_1}\right)
\delta\left(\frac{t_2}{t_1},\frac{\muu_2}{\muu_1}\right)
+
\delta\left(\frac{t_1}{t_2},\h\right)
\delta\left(\frac{t_3}{t_1},\frac{\muu_3}{\muu_1}\right)
\delta\left(\frac{t_2}{t_1},\h\right),
\end{equation}
\begin{equation}\label{s2s1s2}
\delta\left(\frac{t_3}{t_2},\frac{\muu_2}{\muu_1}\right)
\delta\left(\frac{t_2}{t_1},\frac{\muu_3}{\muu_1}\right)
\delta\left(\frac{t_3}{t_2},\frac{\muu_3}{\muu_2}\right)
+
\delta\left(\frac{t_2}{t_3},\h\right)
\delta\left(\frac{t_3}{t_1},\frac{\muu_3}{\muu_1}\right)
\delta\left(\frac{t_3}{t_2},\h\right),
\end{equation}
\cite[\S9]{RW}.
Geometrically the above expressions come from two natural resolutions of the singularity $$z_{31}(z_{31}-z_{21}z_{32})=0$$
describing the boundary of the big cell in the complete flag variety $GL_3(\C)/B$ intersected with the opposite cell, see \cite[Ex.~16.1]{WeSel}. The equality
is equivalent to the {\em four term identity} \cite[eq.~(2.7)]{RTV}.

Strangely, the Braid relation for the group $Sp_2(\C)$ is trivial and for $G_2$ it can be deduced from $SL_3(\C)$.

\subsection{Lehn--Sorger example}\label{Lehn-Sor} Let $G\to Sp_2(\C)\subset GL_4(\C)$ be the example described in \cite{LeSo}, \cite{Grab}. Here $G$ is the bi-tetrahedral group. The linear space $V$ is the direct sum  $W\oplus W^*$, where $W$ is one of  two-dimensional  non-self-conjugate representations of $G$. The quotient $(W\oplus W^*)/G$ admits two crepant resolutions.
The computation of the tangent weights can be found in \cite[Th.~4.5]{Grab}.
The equivariant elliptic class computed from these resolutions give the same result, but the expressions for the elliptic class differ by the switch of variables.
After subtraction of the identical summands on both sides, the resulting identity can be written as:
$$F(t_1,t_2)=F(t_2,t_1)\,,$$
where
\begin{multline}\label{F-def}F(t_1,t_2)=\delta\left(\tfrac{t_1}{t_2},\h\right)\delta\left(\tfrac{t_1^3}{t_2},\h\right)\delta\left(t_2^2,\h\right)\delta\left(\tfrac{t_2^2}{t_1^2},\h\right)+\\+\delta\left(t_1^2,\h\right)\delta\left(\tfrac{t_1^4}{t_2^2},\h\right)\delta\left(\tfrac{t_2}{t_1},\h\right)\delta\left(\tfrac{t_2^3}{t_1^3},\h\right)+\\+\delta\left(\tfrac{t_1^3}{t_2^3},\h\right)\delta\left(\tfrac{t_1^2}{t_2^2},\h\right)\delta\left(\tfrac{t_2^3}{t_1},\h\right)\delta\left(\tfrac{t_2^4}{t_1^2},\h\right)\,.\end{multline}
Our formula is nothing but plugging in the exponents (of $t$-variables) given by \cite[Th.~4.5]{Grab}.
This example is continued in \S\ref{LScont}.
\section{The orbifold elliptic class}
The orbifold elliptic class is defined in the presence  of an action of a finite group $G$. Again, for a singular equivariant pair it is defined as the image of the orbifold elliptic  class of a resolution:
$$\Ell_{orb}(X,D_X,G)=f_*\Ell_{orb}(Y,D_Y,G)\,.$$
Let us quote the original definition of \cite[Def.~3.2]{BoLi} in a precise form.
Let $(Y,D_Y)$ be a Kawamata log-terminal $G$-normal pair (cf \cite[Def.~3.1]{BoLi})
with $D_Y=-\sum_\ell d_\ell E_\ell$.
We define \emph{orbifold elliptic class} of the triple $(Y,D_Y,G)$ as
an element of  $H^*(Y)$ (or the Chow group) by the formula
$$\Ell_{orb}(Y,D_Y,G):=
\frac 1{\vert G\vert }\sum_{g,h,gh=hg}\sum_{Z\subset Y^{g,h}}(i_{Y^Z})_*
\Bigl(
\prod_{k:\;\lambda^Z_k=\nuu^Z_k=0} x_k
\Bigr)
$$
$$\times\prod_{k} \frac{ \theta(\frac{x_k}{2 \pi \ii }+
 \lambda^Z_k-\tau \nuu^Z_k-z )}
{ \theta(\frac{x_k}{2 \pi \ii }+
 \lambda^Z_k-\tau \nuu^Z_k)}  e^{2 \pi \ii \nuu^Z_kz}
$$
$$\times\prod_{\ell}
\frac
{\theta(\frac {e_\ell}{2\pi\ii}+\epsilon_\ell^Z-\zeta_\ell^Z\tau-(d_\ell+1)z)}
{\theta(\frac {e_\ell}{2\pi\ii}+\epsilon_\ell^Z-\zeta_\ell^Z\tau-z)}
\,\frac{\theta(-z)}{\theta(-(d_\ell+1)z)} e^{2\pi\ii d_\ell\zeta_\ell^Zz}.
$$
Here $Z\subset Y^{g,h}$
is an irreducible component of the fixed set of the commuting
elements $g$ and $h$ and $i_{Z} : Z\to Y$ is the corresponding embedding. The
restriction of $TY$ to $Z$ splits as a sum of one dimensional representations on which $g$ (respectively $h$) acts with the eigenvalues $\e(\lambda^Z_k)$ (resp. $\e(\nuu^Z_k)$),  $\lambda^Z_k,\nuu^Z_k\in \Q \cap [0, 1)$. The Chern roots of $(TY)_{|Z}$ are denoted by $x_k$. In addition, $e_\ell = c_1 (E_\ell )$ and $\e(\epsilon_\ell^Z)$, $\e(\zeta_\ell^Z)$ with $\epsilon_\ell^Z,\zeta_\ell^Z \in \Q \cap [0, 1)$
are the eigenvalues of $g$ and $h$ acting on ${\mathcal O}(E_\ell)$ restricted to $Z$ if $E_\ell$ contains $Z$ and is zero otherwise.
\medskip

Let us assume that the torus action commutes with the action of $G$. The formula for the $\T$-equivariant orbifold elliptic class  simplifies significantly when the fixed point set is discrete. For a normal crossing divisor situation with isolated fixed points
 we can assume that (at the fixed points) the divisor classes coincide with the Chern roots. Then, in the multiplicative notation and after the correction by the factor $(2\pi \ii)^{\dim Y}$, the local formula for the orbifold class takes the form
\begin{multline}\label{elldefor}\frac{\Ell_{orb}^\T(Y,D_Y,G)}{eu(\tx,Y)}=
\frac1{|G|}\sum_{g,h,gh=hg}
\prod_{k=1}^n \delta\left(\,\e(\lambda^{g,h}_k- \nuu^{g,h}_k\tau)\tt^{w_k}\,,\,\h^{1-a_k}\,\right)\,\h^{(a_k-1)\nuu^{g,h}_k}
\end{multline}
\begin{equation}\label{mckformula}
=\frac1{|G|}\sum_{g,h,gh=hg}
\prod_{k=1}^n \delta\left(\,
\zeta^{g,h}_k q^{-\nuu^{g,h}_k}\tt^{w_k}\,,\,\h^{1-a_k}\,\right)\,\h^{(a_k-1)\nuu^{g,h}_k}
\,,
\end{equation}
where $
\zeta^{g,h}_k=\e(\lambda^{g,h}_k)$ for $k=1,2,\dots,\dim Y$ are the eigenvalues of $g$ acting on $(TY)_{|Y^{g,h}}$. The fixed point sets $Y^{g,h}$ are considered locally, therefore we do not use $Z$ as the superscript of eigenvalues, but the pair $(g,h)$.
The main result of \cite{BoLi} states that
\begin{theorem}[\cite{BoLi}, Theorem 5.3]\label{BoLimK}
Let $(X;D_X)$ be a Kawamata log-terminal pair which is invariant under
an effective action of $G$ on $X$. Let $\psi: X\to X/G $
be the quotient morphism.  Then
$$
\psi_* \Ell_{orb}(X,D_X,G)=\Ell(X/G,D_{X/G}),
$$
provided that $\psi^*(K_{X/G} + D_{X/G}) = K_X + D_X$.
\end{theorem}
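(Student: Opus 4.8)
The plan is to reduce the statement to a local toric model and then to prove the resulting theta identity by induction on subdivisions of a fan, the inductive step being governed by the higher Fay relation \eqref{Fayhigh}. First I would use the defining property that both sides are push-forwards from resolutions and are independent of the chosen resolution. Choosing a $G$-equivariant resolution $\rho:\widetilde X\to X$ and using $\Ell_{orb}(X,D_X,G)=\rho_*\Ell_{orb}(\widetilde X,D_{\widetilde X},G)$ together with functoriality of $\psi_*$, one reduces to the case that $X$ is smooth, $G$ acts on $X$, and $D_X$ is a $G$-invariant simple normal crossing divisor with normal crossing branch data. A further $G$-equivariant blow-up makes $(X,D_X)$ and the quotient map transverse enough that a single resolution $Y\to X/G$ fits into a commutative square dominating both $X/G$ and a quotient of a blow-up of $X$; the hypothesis $\psi^*(K_{X/G}+D_{X/G})=K_X+D_X$ is preserved under these modifications, so the discrepancy coefficients $a_k$ in \eqref{resolutionformula} and the eigenvalue data in \eqref{mckformula} remain consistent.

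Next I would localize. Both the orbifold class, a sum over components of the fixed loci $X^{g,h}$ of commuting pairs as in \eqref{elldefor}, and the ordinary class are local near the $\T$-fixed points, and $\psi_*$ is computed orbit by orbit. Linearizing the stabilizer action at a fixed point reduces the problem to the model $X=V=\C^n$ with $G\subset GL(V)$ finite and $D_X$ supported on coordinate hyperplanes. Since only commuting pairs $(g,h)$ contribute, and these can be simultaneously diagonalized, the essential case is that of an abelian $G$ acting diagonally; then $V/G$ is toric and a log resolution is a simplicial subdivision of the cone $\R_{\ge 0}^n$ taken with respect to the overlattice $N=\Z^n+\sum_{g\in G}\Z\cdot\mathrm{wt}(g)$.

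The heart of the matter is then a combinatorial theta identity indexed by this fan. Writing \eqref{rescrepformula}--\eqref{mckformula} in toric coordinates, the right-hand side becomes a sum of products of $\delta$-factors over the maximal cones of the subdivision, while the left-hand side is the orbifold sum over $(g,h)$; the fractional shifts $\e(\lambda^{g,h}_k-\nu^{g,h}_k\tau)$ and the correction $\h^{(a_k-1)\nu^{g,h}_k}$ are precisely absorbed by the quasi-periodicity \eqref{deltatau} of $\delta$ under $A\mapsto qA$ and $B\mapsto qB$. I would then prove the identity by induction on the number of rays of the subdivision: the base case is a nonsingular cone, where the orbifold double sum over all commuting pairs collapses to the single crepant term of \eqref{rescrepformula}, and each inductive step is a star subdivision, that is, a blow-up, for which the change in the product of $\delta$-factors is exactly the higher Fay relation \eqref{Fayhigh} established in \S\ref{Fay}. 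This is the content of the strongest theta identity, Lemma 8.1 of \cite{BoLi}.

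The main obstacle I expect is the bookkeeping of the age data $(\lambda^{g,h}_k,\nu^{g,h}_k)$ through the subdivisions. One must check that under a star subdivision the eigenvalues of $g$ and $h$ on the new tangent directions, the new discrepancy $\sum a_k-n+1$ appearing as in the blow-up of \S\ref{Fay}, and the weight factors $\h^{(a_k-1)\nu^{g,h}_k}$ recombine so that the Fay step applies verbatim, and separately that the base-case collapse genuinely reduces the sum over all commuting pairs to the contribution of the identity pair. Matching these fractional and periodicity contributions through \eqref{deltatau} is the technical core; once it is in place the induction closes and yields $\psi_*\Ell_{orb}(X,D_X,G)=\Ell(X/G,D_{X/G})$.
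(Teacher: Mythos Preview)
This theorem is not proved in the present paper; it is quoted from \cite{BoLi} (as the citation in the theorem heading indicates) and is used only as an input. Immediately after the statement the paper simply refers the reader to \cite{Wae} and \cite{DBW} for the equivariant version and then proceeds to \emph{apply} the result to derive theta identities. So there is no proof here to compare your proposal against.

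That said, your outline is broadly in line with the strategy of the original Borisov--Libgober argument in \cite{BoLi}: reduction via equivariant resolutions to a smooth normal-crossing situation, localization to a linear model $V/G$ with $G$ abelian (hence toric), and then a combinatorial theta identity over a fan, which is exactly their Lemma~8.1. One caution: your description of the base case is slightly off. In the toric model the orbifold double sum over commuting pairs $(g,h)$ does not ``collapse to the single crepant term'' at a nonsingular cone; rather, the identity between the orbifold sum and the resolution sum is what Lemma~8.1 of \cite{BoLi} asserts in full, and its proof is not simply an induction on star subdivisions governed by the Fay relation. The Fay relation \eqref{Fayhigh} in this paper encodes invariance of $\Ell$ under a single blow-up, which is a different (and much weaker) statement than the equality of the orbifold and quotient classes.
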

For the equivariant version see \cite{Wae}, \cite{DBW}.
If $X$ is a vector space with a linear action of a finite group $G$ commuting with $\T$, then the $\T$-equivariant version of the equality above is equivalent to the equality of Laurent power series
$$\frac{\Ell_{orb}^\T(X,D_X,G)_{|0}}{eu(0,X)}=\frac{\Ell^\T(X/G,D_{X/G})_{|\psi(0)}}{eu(\psi(0),M)}\,.$$
If $D_{X/G}=0$ and  $Y\to X/G$ is a resolution then the right hand side is given by \eqref{mckformula} and the left hand side is the sum over the fixed points $Y^\T$ of the expressions given in the formula \eqref{resolutionformula}.  If the resolution is crepant, then the formula becomes simpler, the summands are of the form \eqref{rescrepformula}.
\bigskip

Interpretation of the orbifold elliptic class in the spirit of \S\ref{explanation} is somehow ambiguous. The conjugate elements $h\in G$ give the same contribution to the sum \eqref{elldefor}. Therefore this sum can be reorganized, so that we sum over the conjugacy classes $[h]\in Conj(G)$ and $g$ belongs to the centralizer $C(h)$:
 \begin{multline}\label{inertia}\frac{\Ell_{orb}^\T(Y,D_Y,G)}{eu(\tx,Y)}=
\sum_{[h]\in Conj(G)}\frac1{|C(h)|}\sum_{g\in C(h)}
\prod_{k=1}^n \delta\left(\,
\zeta^{g,h}_k q^{-\nuu^{g,h}_k}\tt^{w_k}\,,\,\h^{1-a_k}\,\right)\,\h^{(a_k-1)\nuu^{g,h}_k}
\,.
\end{multline}
In the limit with $q\to 1$ we obtain the formula \cite[Th.~13, Cor.~14]{DBW}, which can be interpreted as the summation over components of the \emph{extended quotient} 
$$\bigsqcup_{[h]\in Conj(G)}X^h/C(h)\,.$$
For the elliptic orbifold class this interpretation is only partial: the formula depends on the normal bundle of $X^h$ in $X$. The normal factor becomes trivial only in the limit, due to \cite[equation (21)]{DBW}.
The summand corresponding to $h=\rm id$ (the identity) is equal to
$$\frac1{|G|}\sum_{g\in G}
\prod_{k=1}^n \delta
\left(\,\zeta^{g}_k \tt^{w_k}\,,\,\h^{1-a_k}\,\right)\,,
$$
where $\zeta^g_k=\zeta^{g,\rm id}_k$ for $k=1,\dots n$ are the eigenvalues of $g$ acting on $T_\tx Y$.
The formula can be treated as the averaged elliptic class of $(Y,D_Y)$. If $Y=V$ is a vector space, $a_k=0$, $\tt^{w_k}=t$, then the formula is a deformation of the expression for the classical Molien series, see \cite[\S9]{DBW}.
Precisely, it is the weighted dimension of the invariants of the \say{elliptic representation} \medskip

\hfil $\left(\h^{\dim V/2}\, \bigotimes_{n \ge 1}
\Bigl(\bigwedge_{-q^{n-1}\h^{-1}}V^* \otimes \bigwedge_{- q^n\h}
V \otimes S_{q^{n-1}}V^* \otimes
S_{q^n}V \Bigr)\right)^G\,.$\medskip

\noindent This elliptic representation only differs from \cite[Formula (3)]{BoLi} by the factor
$S_1V^*=Sym(V^*)$ playing the role of the inverse of K-theoretic Euler class. The remaining summands of the formula \eqref{inertia} for $h\neq \rm id$ are more complicated.

\section{Orbifold elliptic class of  symplectic singularities}\label{genus}
Let us concentrate on the case of symplectic quotient singularities.
Let $V= \C^{2n}$ be endowed with the standard symplectic structure and let $G\subset Sp_n(\C)
$ be a finite subgroup.
Then the eigenvalues $\lambda^{g,h}_k$ and $\nuu^{g,h}_k$ come in pairs.
We can assume that
$$\nuu^{g,h}_{k+n}=\begin{cases}0&\text{if }\nuu^{g,h}_{k}=0\\
1-\nuu^{g,h}_{k}&\text{if }\nuu^{g,h}_{k}>0\,.\end{cases}$$
and similarly for $\lambda^{g,h}_k$. For $\nuu^{g,h}_{k}>0$ the pair of factors can be transformed:
\begin{multline}
\delta\left(\zeta^{g,h}_k q^{-\nuu^{g,h}_k} \tt^{w_k},\h^{1-a_k}\right)\h^{(a_{k}-1)\nuu^{g,h}_{k}}\cdot
\delta\left(\zeta^{g,h}_{n+k}q^{-\nuu^{g,h}_{n+k}} \tt^{w_{n+k}},\h^{1-a_{n+k}}\right)
\h^{(a_{n+k}-1)\nuu^{g,h}_{n+k}}=\\ \hfill=
\delta\left(\zeta^{g,h}_k q^{-\nuu^{g,h}_k} \tt^{w_k},\h^{1-a_k}\right)\cdot
\delta\left((\zeta^{g,h}_{k})^{-1}q^{\nuu^{g,h}_{n+k}-1} \tt^{w_{n+k}},\h^{1-a_{n+k}}\right)\cdot
\h^{(a_{k}-1)\nuu^{g,h}_{k}+(a_{n+k}-1)(1-\nuu^{g,h}_{k})}\,.\end{multline}
By \eqref{deltatau} we obtain
\begin{multline}\label{sympmckay}
\delta\left(\zeta^{g,h}_k q^{-\nuu^{g,h}_k} \tt^{w_k},\h^{1-a_k}\right)\cdot\delta\left((\zeta^{g,h}_{k})^{-1}q^{\nuu^{g,h}_{k}} \tt^{w_{n+k}},\h^{1-a_{n+k}}\right)\cdot\hfill\\ \hfill\cdot \h^{1-a_{n+k}}\cdot
\h^{(a_{k}-1)\nuu^{g,h}_{k}+(a_{n+k}-1)(1-\nuu^{g,h}_{k})} =\\ \hfill
=
\delta\left(\zeta^{g,h}_k q^{-\nuu^{g,h}_k} \tt^{w_k},\h^{1-a_k}\right)\cdot\delta\left((\zeta^{g,h}_{k})^{-1}q^{\nuu^{g,h}_{k}} \tt^{w_{n+k}},\h^{1-a_{n+k}}\right)\cdot
\h^{\nuu^{g,h}_{k}(a_k-a_{n+k})} \,.
\end{multline}
The same holds for $\nuu^{g,h}_{k}=0$.

If the torus acts via the scalar multiplication and the divisor is empty (i.e.~$a_k=0$), then the formula \eqref{sympmckay} reduces to
$$\delta\left(\zeta^{g,h}_k q^{-\nuu^{g,h}_k} t,\h\right)\cdot\delta\left((\zeta^{g,h}_{k})^{-1}q^{\nuu^{g,h}_{k}} t,\h\right) $$ or equivalently setting $\lambda=\lambda^{g,h}_k-\nuu^{g,h}_k\tau$ we obtain the factor
\begin{equation}\label{Phi-def}\Phi(\lambda):=\delta\left(\e(\lambda) t,\h\right)\cdot\delta\left(\e(-\lambda) t,\h\right) \end{equation}

If the divisor is empty, and  $\T=(\C^*)^2$ acts via the scalar multiplication on each summand in $V=W\oplus W^*$ separately, then we obtain the factor
\begin{equation}\label{Psi-def}\Psi(\lambda):=\delta\left(\e(\lambda) t_1,\h\right)\cdot\delta\left(\e(-\lambda) t_2,\h\right) \,.\end{equation}
The elliptic class of this kind of singularity has a symmetry property:
\begin{remark}\rm Suppose $V=W\oplus W^*$,  $G\subset GL(W)$, and $\T=(\C^*)^2$ acts via the scalar multiplication on each summand, as in  the example \S\ref{Lehn-Sor}. Then
the elliptic class of $V/G$ is a symmetric function with respect to the coordinate characters $t_1,t_2$. Indeed Since $\delta(a,b)=-\delta(a^{-1},b^{-1})$ the factor \eqref{Psi-def} in the orbifold elliptic class has the property
\begin{equation}\label{odwracanie}\Psi(\lambda)(t_1,t_2,\h)=\Psi(\lambda)(t_2^{-1},t_1^{-1},\h^{-1})\,.\end{equation}
Therefore
$$\frac{\Ell^\T_{orb}(V,G,\emptyset)}{eu(0,V)}(t_1,t_2,\h)=\frac{\Ell^\T_{orb}(V,G,\emptyset)}{eu(0,V)}(t_2^{-1},t_1^{-1},\h^{-1})\,
.$$
In the expression \eqref{resolutionformula} for the elliptic class coming from a resolution there is no shift of variables therefore
$$\frac{\Ell^\T(V/G,\emptyset)}{eu([0],M)}(t_2^{-1},t_1^{-1},\h^{-1})=\frac{\Ell^\T(V/G,\emptyset)}{eu([0],M)}(t_2,t_1,\h)\,
.$$
By Theorem \ref{BoLimK} we obtain the conclusion \eqref{odwracanie}.
The symmetry may be easily deduced geometrically.
\end{remark}

\section{Examples}
\subsection{The singularity $D_4$}

The singularity $D_4$ is the quotient of $\C^2$ by the bi-dihedral group of 8 elements which is isomorphic to the quaternionic group  generated by the matrices
$${\bf i}=\left(\begin{matrix}\ii& 0\\0& -\ii\end{matrix}\right)\,,\quad{\bf j}=
\left(\begin{matrix}0& 1\\-1& 0\end{matrix}\right)\quad\text{and}\quad {\bf k}={\bf i\,j}=
\left(\begin{matrix}0& \ii\\\ii& 0\end{matrix}\right)\,.$$ We consider the one dimensional torus $\T=\C^*$ acting via the scalar multiplication.

\subsubsection{The elliptic class computed via resolution}
The following quiver represents the resolution of the singularity $D_4$
$$\begin{matrix}\nwarrow \\
^4\phantom{a}&\bullet&\\
&&\!\!_2\!\!\nwarrow&\\
&&&\boxed{\P^1}&\stackrel{2}{\longrightarrow}&\bullet&\stackrel{4}{\longrightarrow}\\
&&\!\!^2\!\!\swarrow\\
_4\phantom{a}&\!\!\bullet&\\
\swarrow
\end{matrix}$$
\medskip

\noindent The internal arrows represent the exceptional divisors with nontrivial torus action.
The divisor $\boxed{\P^1}$ is fixed pointwise by $\C^*$.
The external arrows represent the normal directions pointing out from the exceptional divisor at the isolated fixed points. The number at each arrow stands for weight of the action along the divisor. The local equivariant elliptic class is given by the formula
\begin{equation}\label{*d4}\frac{\Ell(X)_{|[0]}}{eu([0])}=3\delta(t^{-2},\h)\cdot\delta(t^4,\h)+\int_{\P^1}\frac{\Ell^\T(Y)_{|\P^1}}{eu(N_{\P^1})}\,.\end{equation}
Here the integral is the localized elliptic genus integrated along the fixed component, $N_{\P^1}$ denotes the normal bundle.  It can be computed as in Example \ref{A1} artificially extending the torus: There exists a neighbourhood of the fixed component which admits a two dimensional torus action having only two fixed points. This neighbourhood is isomorphic to the neighbourhood of the exceptional divisor for the singularity $A_1$ therefore the integral  is the specialization of the sum (**) of Example \ref{A1}:
\begin{equation}\label{calka}\int_{\P^1}\frac{\Ell^\T(Y)_{|\P^1}}{eu(N_{\P^1})}= \tfrac12\left(\Phi(0)+
\Phi(\tfrac 12 )+
\Phi(-\tfrac1 2\tau)+
\Phi(\tfrac 12 -\tau\tfrac1 2)
\right)
\,,\end{equation}
where $\Phi(\lambda)$ is defined by \eqref{Phi-def}.

\subsubsection{The computation of the orbifold elliptic class}
The sub-sum of \eqref{mckformula} indexed by the pairs $(g,1)$ is equal to
$$S_{1}=\Phi(0)+\Phi(-\tfrac12)+6\Phi(-\tfrac14)\,.$$
The sub-sum of \eqref{mckformula} indexed by the pairs $(g,-1)$ is equal to
$$S_{-1}=\Phi(-\tfrac12\tau)+\Phi(\tfrac12-\tfrac12\tau)+6\Phi(\tfrac14-\tfrac12\tau)\,.$$
The remaining six possibilities $h\in\{\pm {\bf i},\pm {\bf j},\pm {\bf k}\}$ give
$$S_{{\bf i}}=\Phi(-\tfrac14\tau)+
\Phi(\tfrac14-\tfrac14\tau)
+\Phi(\tfrac12-\tfrac14\tau)
+\Phi(\tfrac34-\tfrac14\tau)\,.
$$
Therefore
$$\frac{\Ell^\T_{orb}(\C^2,\emptyset,G)_{|0}}{eu(0,\C^2)}=\tfrac18\left(S_{1}+S_{-1}+6S_{{\bf i}}\right)\,.$$
After simplification we obtain
\begin{multline*}\tfrac68\left(\Phi(\tfrac14)+
\Phi(\tfrac14-\tfrac14\tau)
+\Phi(\tfrac14-\tfrac12\tau)
+\Phi(\tfrac14-\tfrac34\tau)
+\Phi(-\tfrac12\tau)
+\Phi(\tfrac12-\tfrac12\tau)\right)-\\
-\tfrac38
\left(\Phi(0)
+
\Phi(\tfrac 12 )+
\Phi(-\tfrac1 2\tau)+
\Phi(\tfrac 12 -\tau\tfrac1 2)
\right)=3\delta(t^{-2},\h)\cdot\delta(t^4,\h)\,.\end{multline*}
The formula can be further transformed:
Since $\Phi(\lambda)=\Phi(1+\tau-\lambda)$ we can write
$6\Phi(\tfrac14-\tfrac14\tau)=3\Phi(\tfrac34-\tfrac34\tau)+3\Phi(\tfrac14-\tfrac14\tau)$. We break in two other terms with coefficients $6$ and obtain (dividing by 3) a remarkable formula
$$\delta(t^{-2},\h)\cdot\delta(t^4,\h)=\tfrac18\sum_{k,\ell=0}^3
(-1)^{(k+1)(\ell+1)}\,\Phi(\tfrac k4-\tfrac\ell4\tau)\,,$$
where $\Phi(\lambda)$ is given by \eqref{Phi-def}. This is a nontrivial relation involving theta function.

\subsection{Lehn--Sorger example continued} We compute the orbifold elliptic class for the example considered in \S\ref{Lehn-Sor}.
\label{LScont}
The Lehn-Sorger group $G\subset GL_2(\C)=GL(W)$ is generated by
$$h_1=
\tfrac{-1+\ii
   \sqrt{3}}{2}
\left(
\begin{array}{cr}
 \frac{1+\ii}{2} & -\frac{1-\ii}{2} \\
 \frac{1+\ii}{2} & \frac{1-\ii}{2} \\
\end{array}
\right)
\,,\qquad h_2=\left(\begin{array}{cr}\ii&0\\0&-\ii\\\end{array}\right)
\,.$$
It is important to know that
$$h_1^3=h_2^2=-id\in Z(G)\,,\quad C(h_1)=C(h_1^2)=\langle h_1\rangle\simeq\Z_6 \,,\quad C(h_2)=\langle h_2\rangle\simeq\Z_4\,.$$
The conjugacy classes of $h\in G$ with the logarithms of the eigenvalues  are listed below.
$$\def\arraystretch{1.5}\begin{array}{|c|c|c|c|}
\hline
h&\,|C(h)|\,&(\nuu_1,\nuu_2)&\text{logarithms of the eigenvalues of }g\in C(h):\;\;(\lambda_1,\lambda_2)\\
\hline\hline
id&24&(0,0)&
(0,0),\;(\tfrac12,\tfrac12),\;4\times(\tfrac16,\tfrac12),\;4\times(\tfrac13,0),\;4\times(\tfrac23,0),
\\
-id&24&(\tfrac12,\tfrac12)& \;4\times(\tfrac56,\tfrac12),\;6\times(\tfrac14,\tfrac34)
\\
\hline
h_1&6&(\tfrac16,\tfrac12)&\\
h_1^2&6&(\tfrac13,0)&(0,0),\;(\tfrac16,\tfrac12),\;(\tfrac13,0),\;(\tfrac12,\tfrac12),\;(\tfrac23,0),\;(\tfrac56,\tfrac12)
\\
h_1^4&6&(\tfrac23,0)&
\\
h_1^5&6&(\tfrac56,\tfrac12)&
\\\hline
h_2&4&(\tfrac14,\tfrac34)&(0,0),\;(\tfrac14,\tfrac34),\;(\tfrac12,\tfrac12),\;(\tfrac34,\tfrac14)
\\
\hline
\end{array}$$
Each quadruple $(\lambda_1,\lambda_2.\nuu_1,\nuu_2)$ contributes the summand
$$\Psi(\lambda_1+\tau\nuu_1)\cdot\Psi(\lambda_2+\tau\nuu_2)$$
to $\Ell_{orb}^\T(\C^4,G,\emptyset)/eu(0,\C^4)$. It is
counted with the weight $1/C(h)$. The formula for $\Psi$ is given in \eqref{Psi-def}.
\medskip

On the other hand, applying the computation of the tangent weights of the resolution presented in \cite[Th.~4.5]{Grab} we find that
$$\frac{\Ell^\T(\C^4/G,\emptyset)}{eu([0],M)}=F_0(t_1,t_2)+F_0(t_2,t_1)+F(t_1,t_2)\,,$$
where
\begin{multline*}F_0(t_1,t_2)=\\=
\delta \left(\tfrac{t_1}{t_2^5},\h\right)
\delta \left(\tfrac{t_1}{t_2^3},\h\right)
\delta \left({t_2^4},\h\right)
\delta \left({t_2^6},\h\right)+
\delta \left(\tfrac{t_1^2}{t_2^4},\h\right)
\delta \left(\tfrac{t_1}{t_2},\h\right)
\delta \left(t_2^2,\h\right)
\delta \left(\tfrac{t_2^5}{t_1},\h\right)\end{multline*}
and $F(t_1,t_2)$ is given by \eqref{F-def}. The equality of elliptic genera implies an identity for theta functions.
The explicit expanded form is too long to present it here.

\section{Diagonal quotient}
\subsection{The quotient $\C^m/\Z_n$}
Let $\Z_n$ act on $\C^m$ via the scalar multiplication by the $n$-th root of unity.
The quotient $X=\C^m/\Z_n$ has an isolated singularity and admits a desingularization via blow up at the origin. The resolution $Y$ is isomorphic to the total space of the bundle $\O(-n)$ over $\P^{m-1}$. The torus $\T=(\C^*)^m$ acts on $\C^m$ coordinatewise and the action commutes with $\Z_n$. Setting $D_X=0$ we find that $D_Y=\pi^*K_X-K_Y$ is
supported by the exceptional divisor $\P^{m-1}$
$$D_Y\;=\;(1-\tfrac mn)\P^{m-1}\,.$$
Therefore the localized equivariant elliptic class is equal to
\begin{equation}\frac{\Ell(X)_{[0]}}{eu([0],M)}=\int_{\P^{m-1}} \delta(e^{c_1^\T(\O(-n))},\h^{m/n})\Ell(\P^{m-1})\,.\end{equation}
By the localization theorem for the full torus we obtain
\begin{equation}\label{LGformula}\frac{\Ell(X)_{[0]}}{eu([0],M)}=\frac1n\sum_{i=1}^m\left(\delta\left(t_i^n,\h^{m/n}\right) \prod_{j\neq i}\delta\left(\tfrac{t_j}{t_i},\h\right)\right)\,.\end{equation}
The orbifold elliptic class is equal to
\begin{equation}\label{MKformula}\frac{\Ell_{orb}(\C^m,\emptyset,\Z_n)_{0}}{eu(0,\C^m)}=\sum_{k,\ell=0}^{n-1}\left(\h^{-\tfrac{m\ell}n}\prod_{i=1}^m\delta\left(\e\left(\tfrac kn-\tfrac \ell n\tau\right) t_i,\h\right)\right)\,.\end{equation}
Again \eqref{LGformula}=\eqref{MKformula} is a nontrivial identity for the theta function.
This equality  for $m=1$ is mentioned in \cite[Cor. 8.2]{BoLi}.

\subsection{Mixing variable types}\label{LaGi}
Assume that $m=n$. It was observed in \cite{LibLG} while analyzing the Landau-Ginzburg model, that if we restrict the action to the one dimensional torus and set the equivariant variable $t=\h^{-1/n}$ (note that $\C^*/\Z_n$ acts effectively on $X=\C^n/\Z_n$) then we obtain the formula for the Elliptic genus of the Calabi-Yau hypersurface in $\mathbb{H}_{CY}\subset\P^{n-1}$. Let us transform this calculation to our notation.
In the K-theory $[T\P^{n-1}]=[n\O(1)]-[\O]$, hence $$\Ell(\P^{n-1})=x^n\,\delta(e^x,\h)^n\,,$$
where $x=c_1(\O(1))$. Let $$u=ch(\O(1))=e^{x}\in H^*(\P^{n-1})$$
be the exponent of the nonequivariant Chern class. Then
\begin{multline*}\frac{\Ell(X)_{[0]}}{eu([0],M)}=\int_{\P^{n-1}}\frac{\Ell^\T(Y)_{|\P^{n-1}}}{eu(N_{\P^{n-1}}))}=\\=\int_{\P^{m-1}} x^n\delta((t/u)^n,\h)\delta(u,\h)^n=\int_{\P^{m-1}} x^n\tfrac{\vt((t/u)^n\h)\vt'(1)}{\vt((t/u)^n)\vt(\h)}
\delta(u,\h)^n=\\
\stackrel{t:=\h^{-1/n}}=\;\int_{\P^{m-1}} x^n
\tfrac {\vt(u^{-n})\vt'(1)}
{\vt(u^{-n}\h^{-1})\vt(\h)}
\delta(u,\h)^n=\int_{\P^{m-1}}x^n
\tfrac {\vt(u^{n})\vt'(1)}
{\vt(u^{n}\h)\vt(\h)}
\delta(u,\h)^n=\\
=\left(\tfrac {\vt'(1)}
{\vt(\h)}\right)^2\int_{\P^{m-1}} x^n
\delta(u^{n},\h)^{-1}
\delta(u,\h)^n=\left(\tfrac {\vt'(1)}
{\vt(\h)}\right)^2Ell(\mathbb{H}_{CY})\,.
\end{multline*}
When we consider unreduced elliptic genera then we get rid of the factor $\left(\tfrac {\vt'(1)}
{\vt(\h)}\right)^2$.
It is interesting to observe that the integral can be expressed by a residue:
\begin{multline*}\int_{\P^{n-1}}\frac{\Ell^\T(Y)_{|\P^{n-1}}}{eu(N_{\P^{n-1}})}=\int_{\P^{n-1}}x^n\delta(t/e^x,\h)\delta(e^x,\h)^n=\\
=\text{Coefficient of }x^{n-1}\text{ in } x^n \delta(t/e^x,\h)\delta(e^x,\h)^n={\rm Res}_{x=0}\big(\delta(t/e^x,\h)\delta(e^x,\h)^n\big)\\
={\rm Res}_{u=1}\big(\delta(t/u,\h)\delta(u,\h)^n/u\big)\,.\end{multline*}

\section{Proof of Theorem \ref{A1main}}

\label{A1proof}
Let $\T=(\C^*)^2$ be the torus acting on $\C^2$ coordinatewise. Denote the coordinates on $\C^2$ by $z_1, z_2$. Let $\Z_n\subset SL_2(\C)\cap \T$ be the subgroup generated by ${\rm diag}(\e(\tfrac 1n),\e(-\tfrac {1}n))$. The action of $\T$ passes to the quotient $X=\C^2/\Z_n$.
The minimal resolution of $X$ is a toric variety having $n$ fixed points joined by the chain of one dimensional orbits of $\T$. The GKM graph of $Y$ is given in \eqref{GKM}. The external edges have loose ends since $Y$ is not compact. They correspond to the strict transforms of the divisors coming from $X$, namely $D_1=\{z_1=0\}/\Z_n$ and $D_2=\{z_2=0\}/\Z_n$. Suppose $D_X= a_1 D_1+a_2 D_2$ is the divisor given by the function $f=z_1^{a_1}z_2^{a_2}$. The divisor $D_Y$ is supported by the strict transforms and $\widetilde D_1$, $\widetilde D_2$ and the exceptional divisor. At the fixed point corresponding to the vertex $\boxed{k}$ the toric coordinate functions are $u_1=\frac{z_1^{n-k+1}}{z_2^{k-1}}$ and $u_2=\frac{z_2^k}{z_1^{n-k}}$. The monomial $f$ is equal to
$$z_1^{a_1}z_2^{a_2}=u_1^{\tfrac{a_1}nk+\tfrac{a_2}n(n-k)}\,u_2^{\tfrac{a_1}n (k-1)+\tfrac{a_2}n(n-k+1)}\,.$$
Therefore the contribution of the fixed point $\boxed{k}$ to the elliptic class is equal to
$$\delta\left(\tfrac{z_1^{n-k+1}}{z_2^{k-1}},\h^{1-\left(\tfrac{a_1}nk+\tfrac{a_2}n(n-k)\right)}\right)\,
\delta\left(\tfrac{z_2^k}{z_1^{n-k}},\h^{1-\left(\tfrac{a_1}n(k-1)+\tfrac{a_2}n(n-k+1)\right)}\right)\,.$$
Setting $\muu_i=\h^{\tfrac{1-a_i}n}$ we obtain the formula (*).
The second formula describes the orbifold elliptic class by application of \eqref{sympmckay}.
By Theorem \ref{sympmckay} the expressions for the above elliptic classes are equal.

\section{Self-duality of $A_{n-1}$ singularity}
As it has been shown in the subsection \ref{LaGi} mixing the equivariant variables $t_i$ with the variable $\h$ leads to interesting results. The variable $\h$ modified by the parameters depending on the divisor multiplicity sometimes plays a role similar to equivariant variables. In \cite{RW}, \cite{RSVZ} the parameters depending on line bundles are called dynamical parameters and in \cite{AO} -- the K\"ahler variables. It is shown in \cite{RSVZ} that exchanging the equivariant variables with dynamical parameters for elliptic classes of the Schubert varieties in the complete flag variety leads to a mirror self-symmetry. We will show that the $A_n$-singularities are self-symmetric in a similar sense.
\medskip

The formula (**) of Theorem \ref{A1main} clearly does not look like being (anti)symmetric with respect to exchange $h$- and $t$-variables. On the other hand:

\begin{proposition}The expression (*) of Theorem \ref{A1main} is antisymmetric with respect to the change of variables
$$t_1\leftrightarrow \muu_1,\qquad t_2\leftrightarrow \muu_2^{-1}\,.$$
that is
$$A_n(\muu_1,\muu_2^{-1},t_1,t^{-1}_2)=-A_n(t_1,t_2,\muu_1,\muu_2)\,.$$\end{proposition}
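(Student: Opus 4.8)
The plan is to establish the antisymmetry term-by-term, using only two structural properties of the multiplicative function $\delta$ recalled earlier: its \emph{symmetry} in the two arguments, $\delta(u,v)=\delta(v,u)$ (a consequence of $\del_\tau(a,b)=\del_\tau(b,a)$ together with $\delta(\e(\alpha),\e(\beta))=\tfrac1{2\pi\ii}\del_\tau(\alpha,\beta)$), and its \emph{oddness} under simultaneous inversion, $\delta(u,v)=-\delta(u^{-1},v^{-1})$ (from $\del_\tau(-a,-b)=-\del_\tau(a,b)$). Write the expression $(*)$ as $A_n=\sum_{k=1}^n P_kQ_k$, where
\[
P_k=\delta\!\left(\tfrac{t_1^{n-k+1}}{t_2^{k-1}},\,\muu_1^{k}\muu_2^{n-k}\right),\qquad
Q_k=\delta\!\left(\tfrac{t_2^{k}}{t_1^{n-k}},\,\muu_1^{k-1}\muu_2^{n-k+1}\right).
\]
The substitution $t_1\leftrightarrow\muu_1$, $t_2\leftrightarrow\muu_2^{-1}$ interchanges the roles of the $t$- and $\muu$-variables, inverting the second family; so after substitution the $t$-monomials occupy the first argument of each $\delta$ and the $\muu$-monomials the second, opposite to their positions in $P_k,Q_k$. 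This is exactly the situation that the symmetry relation is designed to repair.

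First I would substitute and collect exponents, obtaining the two factors $\widetilde P_k=\delta(\muu_1^{n-k+1}\muu_2^{k-1},\,t_1^{k}t_2^{k-n})$ and $\widetilde Q_k=\delta(\muu_1^{k-n}\muu_2^{-k},\,t_1^{k-1}t_2^{k-n-1})$. The decisive move is the reindexing $k\mapsto n-k+1$. Applying $\delta(u,v)=\delta(v,u)$ to $\widetilde P_k$ brings the $t$-monomial to the front, and a direct comparison of exponents identifies the result with $P_{n-k+1}$ \emph{with no sign}. Applying the symmetry and then the oddness to $\widetilde Q_k$ yields $-\delta(t_1^{1-k}t_2^{n+1-k},\,\muu_1^{n-k}\muu_2^{k})$, and the same exponent comparison identifies this with $-Q_{n-k+1}$. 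Hence $\widetilde P_k\widetilde Q_k=-P_{n-k+1}Q_{n-k+1}$, and summing over $k$ (equivalently over $j=n-k+1$, which also runs through $1,\dots,n$) gives $A_n(\muu_1,\muu_2^{-1},t_1,t_2^{-1})=-\sum_j P_jQ_j=-A_n(t_1,t_2,\muu_1,\muu_2)$.

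The conceptual content is light, so I expect the only genuine obstacle to be the exponent bookkeeping. Two things must be checked carefully: that both substituted factors map to the \emph{same} index $n-k+1$ (a diagonal, not a crossed, matching — in particular $\widetilde P_k$ matches a $P$ and $\widetilde Q_k$ a $Q$), and, crucially, that the oddness relation is required for exactly one of the two factors. It is precisely this asymmetry — one factor acquiring the sign $-1$ and the other not — that produces the global minus sign; were oddness needed for both factors or for neither, the identity would come out symmetric rather than antisymmetric. A clean way to organize the verification is to record, for each of $P_k$ and $Q_k$, the four exponents of $t_1,t_2,\muu_1,\muu_2$ as affine functions of $k$, and then check that the substituted-and-symmetrized factors reproduce those affine functions under $k\mapsto n-k+1$, tracking the single inversion that forces the sign on the $Q$-factor.
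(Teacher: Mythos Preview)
Your proof is correct and follows essentially the same approach as the paper: substitute, apply the symmetry $\delta(u,v)=\delta(v,u)$ and the oddness $\delta(u^{-1},v^{-1})=-\delta(u,v)$ (the latter to exactly one factor), and recognize the result as the $(n-k+1)$-th summand with a minus sign. The paper's proof is terser but carries out the identical manipulation.
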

\begin{proof}Each summand of (*) after the substitution takes the form
\begin{multline*}\Del\left( {\muu_1^{n-k+1}}{\muu_2^{k-1}},\tfrac{t_1^{k}}{t_2^{n-k}}\right)
\Del\left(\tfrac1{\muu_1^{n-k}\muu_2^{k}},\tfrac{t_1^{k-1}}{t_2^{n-k+1}}\right)=
\\=-\Del\left( {\muu_1^{n-k+1}}{\muu_2^{k-1}},\tfrac{t_1^{k}}{t_2^{n-k}}\right)
\Del\left({\muu_1^{n-k}\muu_2^{k}},\tfrac{t_2^{n-k+1}}{t_1^{k-1}}\right)=
\\=-\Del\left( \tfrac{t_1^{k}}{t_2^{n-k}},{\muu_1^{n-k+1}}{\muu_2^{k-1}}\right)
\Del\left(\tfrac{t_2^{n-k+1}}{t_1^{k-1}},{\muu_1^{n-k}\muu_2^{k}}\right)\,.
\end{multline*}
Setting $k'=n-k+1$ we obtain exactly the summand of the original formula with the minus sign.
\end{proof}

\begin{corollary}We have
\begin{multline*} \sum_{k=0}^{n-1}
\sum_{\ell=0}^{n-1}\big(\tfrac{\muu_2}{\muu_1}\big)^\ell\Del\left(\e\left(\tfrac{k-\ell\tau}n\right)t_1,\muu_1^n\right)
\Del\left(\e\left(\tfrac{\tau\ell-k}n\right)t_2,\muu_2^n\right)
=\\=
 -\sum_{k=0}^{n-1}
\sum_{\ell=0}^{n-1}(t_1t_2)^{-\ell}\Del\left(\e\left(\tfrac{k-\ell\tau}n\right)\muu_1,t_1^n\right)
\Del\left(\e\left(\tfrac{\tau\ell-k}n\right)\muu_2^{-1},t_2^{-n}\right)\,.\end{multline*}
\end{corollary}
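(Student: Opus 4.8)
The plan is to recognize both sides of the asserted identity as instances of the function $B_n$ from Theorem \ref{A1main}, and then to convert $B_n$ into $A_n$ so that the antisymmetry established in the Proposition can be applied directly. First I would observe that the left-hand side is nothing but $n\,B_n(t_1,t_2,\muu_1,\muu_2)$: comparing with the definition of $B_n$ in Theorem \ref{A1main}, the summands agree term by term, and the only discrepancy is the absence of the prefactor $\tfrac1n$, which accounts for the overall factor $n$.

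Next I would identify the right-hand side by the same inspection. Substituting $t_1\mapsto\muu_1$, $t_2\mapsto\muu_2^{-1}$, $\muu_1\mapsto t_1$, $\muu_2\mapsto t_2^{-1}$ into the definition of $B_n$, the prefactor $\big(\tfrac{\muu_2}{\muu_1}\big)^\ell$ becomes $\big(\tfrac{t_2^{-1}}{t_1}\big)^\ell=(t_1t_2)^{-\ell}$, the first factor becomes $\Del\big(\e(\tfrac{k-\ell\tau}n)\muu_1,t_1^n\big)$, and the second becomes $\Del\big(\e(\tfrac{\tau\ell-k}n)\muu_2^{-1},t_2^{-n}\big)$, using $(t_2^{-1})^n=t_2^{-n}$ and $(\muu_2^{-1})^n=\muu_2^{-n}$. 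Hence the right-hand side of the corollary equals exactly $-n\,B_n(\muu_1,\muu_2^{-1},t_1,t_2^{-1})$.

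Then I would invoke Theorem \ref{A1main}, which asserts $A_n=B_n$ identically as meromorphic functions of the four variables (and $\tau$). Applying this to both expressions reduces the claim to
$$n\,A_n(t_1,t_2,\muu_1,\muu_2)=-n\,A_n(\muu_1,\muu_2^{-1},t_1,t_2^{-1}),$$
which is precisely $n$ times the antisymmetry relation $A_n(\muu_1,\muu_2^{-1},t_1,t_2^{-1})=-A_n(t_1,t_2,\muu_1,\muu_2)$ established in the Proposition. Dividing by $n$ completes the argument.

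The work is entirely bookkeeping: the only place demanding care is matching the exponent prefactors and the powers of $t_2$ under the substitution, together with tracking the single overall sign furnished by the Proposition. I expect no genuine analytic obstacle, since all the substantive content — the equality $A_n=B_n$ and the antisymmetry of $A_n$ — is already supplied by Theorem \ref{A1main} and the Proposition; the corollary is simply their composition.
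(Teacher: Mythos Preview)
Your proof is correct and follows exactly the approach the paper intends: the corollary is stated immediately after the Proposition with no separate proof, because it is meant to be read as the direct combination of Theorem~\ref{A1main} ($A_n=B_n$) with the antisymmetry of $A_n$ just established. Your identification of the two sides as $n\,B_n(t_1,t_2,\muu_1,\muu_2)$ and $-n\,B_n(\muu_1,\muu_2^{-1},t_1,t_2^{-1})$ and the subsequent reduction to the Proposition is precisely this argument made explicit.
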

This effect does not appear in general. It is due to a special form of the action of the  torus on the resolution of $A_{n-1}$ singularity. For example the symplectic subtorus acts via the same character along the exceptional divisors.

\section{Hirzebruch class --- the limit with $q\to 0$}
The limit $q\to 0$ corresponds to  $\tau\to i\infty$.
We set $y=\e(z)=\h^{-1}$.
Then $$\lim_{\tau\to i\infty}\frac{\theta(\upsilon+\nuu\tau-z)}{\theta(\upsilon+\nuu\tau)}=\begin{cases}y^{-1/2}& \text{ if } -1<\nuu<0\\
y^{-1/2}\,\frac{1-y\e(-\upsilon)}{1-\e(- \upsilon)}& \text{ if }\nuu=0\\
y^{1/2}& \text{ if } 0<\nuu<1\,,\end{cases}$$
or equivalently  $$\lim_{q\to 0}
\frac{\vt(t\,\h\, q^\nuu)}{\vt(t\, q^\nuu)}
=\begin{cases}\h^{1/2}& \text{ if } -1<\nuu<0\\
\h^{1/2}\,\frac{1-\h^{-1}t^{-1}}{1-t^{-1}}& \text{ if }\nuu=0\\
\h^{-1/2}& \text{ if } 0<\nuu<1\,,\end{cases}$$
see \cite[proof of Prop.~3.13]{BoLi} for the proof of the first two limits, and use (\ref{per2}) to deduce the third one.
\medskip

Let  $T_j=t_j^{-1}$ for $j=1,2$.
The equality of unreduced equivariant elliptic classes of $A_{n-1}$  singularity with $D_X=\emptyset$  specializes to the equality of rational functions
{\def\Dellim#1#2{\frac{1-y \tfrac{#2}{#1}}{1- \tfrac{#2}{#1}}}
$$(*)_\infty=y^{-1}\sum_{k=1}^{n}\Dellim{t_1^{n-k+1}}{t_2^{k-1}}
\Dellim{t_2^{k}}{t_1^{n-k}},\\
$$
}
$$(**)_\infty=y^{-1}\frac 1n\sum_{k=0}^{n-1}\frac{1-y\,\e(\tfrac kn) t_1^{-1}}{1-\,\e(\tfrac k n) t_1^{-1}}\cdot\frac{1-y\,\e(-\tfrac k n) t_2^{-1}}{1-\,\e(-\tfrac k n) t_2^{-1}}+(n-1)\,.$$
By elementary transformations the sum $(*)_\infty$ can be written as
$$(***)_\infty=y^{-1}(1 - y) (1 -
    y(t_1t_2)^{-1} ) \frac{1 - (t_1 t_2)^{-n}}{(1 - (t_1 t_2)^{-1}) (1 - t_1^{-n}) (1 - t_2^{-n})} +n \,.$$
Setting $t_1=t_2=T^{-1}$ we obtain a trigonometric identity
$$\frac 1n\sum_{k=0}^{n-1}\frac{1-2\cos(\frac{2k\pi }n) y\, T+y^2T^2}{1-2\cos(\frac{2k\pi }n) T+T^2}=(1-y)\left(1-y
   T^2\right)\frac{1-T^{2 n}}{\left(1-T^2\right)
   \left(1-T^{n}\right)^2}+y\,.$$
In particular for $y=0$
$$\frac 1n\sum_{k=0}^{n-1}\frac{1}{1-2\cos(\frac{2k\pi }n) T+T^2}=\frac{1-T^{2 n}}{\left(1-T^2\right)
   \left(1-T^{n}\right)^2}\,.$$
The equality of expressions $(*)_\infty$, $(**)_\infty$ and $(***)_\infty$ was already noticed in \cite[\S5.5]{DBW}.


\end{document}